\sloppy \theoremstyle{plain}
\newtheorem{theorem}{Theorem}[section]
\newtheorem{proposition}[theorem]{Proposition}
\newtheorem{corollary}[theorem]{Corollary}
\newtheorem{lemma}[theorem]{Lemma}
\newtheorem{definition}[theorem]{Definition}
\newtheorem{notation}[theorem]{Notation}
\newtheorem{example}[theorem]{Example}
\newtheorem{remark}[theorem]{Remark}
\newcommand{\BP}{Brenner Property }
\DeclareMathOperator{\SL}{SL}
\DeclareMathOperator{\GL}{GL}
\DeclareMathOperator{\rank}{rank}
\DeclareMathOperator{\covol}{covol}
\DeclareMathOperator{\aut}{Aut}
\DeclareMathOperator{\Aut}{Aut}
\DeclareMathOperator{\kernel}{Ker}
\DeclareMathOperator{\Ad}{Ad}
\DeclareMathOperator{\linspan}{span}
\DeclareMathOperator{\Sp}{Sp}
\DeclareMathOperator{\Gal}{Gal}
\def\Z{\Bbb Z}
\def\Q{\Bbb Q}
\def\N{\Bbb N}
\def\la{\langle}
\def\ra{\rangle}
\def\nom{\triangleleft}
\DeclareMathOperator\tor{T}
\title{First order rigidity
 of non-uniform higher rank arithmetic groups}
\author{Nir Avni}
\address{Nir Avni,
Department of Mathematics, Northwestern University, Evanston, IL 60201, USA.}
\email{avni.nir@gmail.com}
\urladdr{\url{http://math.northwestern.edu/\~nir}}
\author{Alexander Lubotzky}
\address{Alexander Lubotzky,
Department of mathematics, the Hebrew University, Jerusalem, Israel.}
\email{alex.lubotzky@mail.huji.ac.il}
\urladdr{\url{http://www.ma.huji.ac.il/~alexlub}}
\author{Chen Meiri}
\address{Chen Meiri,
Department of Mathematics, Technion, Haifa, Israel.}
\email{chenm@technion.ac.il}
\begin{document}

\maketitle

\begin{flushright}\emph{ In Memory of Daniel G. Mostow }
\end{flushright}

\begin{abstract} If $\Gamma$ is an irreducible non-uniform higher-rank
characteristic zero 
 arithmetic lattice (for example $\SL_n(\mathbb{Z})$, $n \geq 3$) and $\Lambda$ is a finitely generated group that is elementarily equivalent to $\Gamma$, then $\Lambda$ is isomorphic to $\Gamma$.
\end{abstract}

\section{Introduction}

In this article, we state and prove a new rigidity result for irreducible non-uniform higher-rank arithmetic lattices. This class includes the groups $\SL_n(\mathbb{Z})$ for $n \geq 3$ and $\SL_n(\mathbb{Z}[1/p])$ for $n,p  \geq 2$.

We recall the definitions. A lattice in a locally compact, second countable group $\mathbf{G}$ is a discrete subgroup $\Gamma \subset \mathbf{G}$ such that there is a fundamental domain with finite Haar measure for the translation action of $\Gamma$ on $\mathbf{G}$. A lattice is called uniform if $\mathbf{G}/\Gamma$ is compact, and non-uniform otherwise. We say that $\Gamma$ is irreducible if the image of $\Gamma$ is dense in any quotient of $\mathbf{G}$ by a  non-compact normal subgroup of $\mathbf{G}$. 

In this paper by a semisimple group we mean a locally compact group $\mathbf{G}$ of the form $\prod_{i=1}^r G_i(F_i)$, where $F_i$ are local fields of characteristic zero  and $G_i$ are connected simple algebraic groups defined over $F_i$ and $G_i(F_i)$ is non-compact for every $1 \le i \le r$. We say that a semisimple group $\mathbf{G}$ has higher-rank if $\sum \rank_{F_i}G_i \geq 2$ and has low-rank otherwise.  
A group which is an irreducible lattice in a semisimple group of higher-rank is called a higher-rank lattice. 
By Mostow's strong rigidity (see Theorem A and page 9 in \cite{Mos}), a group cannot be an irreducible lattice in both a semisimple higher-rank group and a semisimple low-rank group, so being an irreducible lattice in a higher-rank group is a property of $\Gamma$. For example, $\SL_n(\mathbb{Z})$ is an irreducible non-uniform lattice in $\SL_n(\mathbb{R})$, $n \geq 2$; it is a higher-rank lattice if $n\geq 3$, while $\SL_n(\mathbb{Z}[1/p])$ is an irreducible non-uniform higher-rank lattice in $\SL_n(\mathbb{R}) \times \SL_n(\mathbb{Q}_p)$ for any $n,p \geq 2$. 

By Margulis' Arithmeticity Theorem, every irreducible higher-rank lattice is arithmetic, in the following sense: Let $k$ be a number field with ring of integers $O$, let $S$ be a finite set of places of $k$, containing all the archimedean ones, and let $O_S:=\left\{ x\in k \mid \left( \forall v\notin S \right) v(x) \geq 0 \right\}$ be the ring of $S$-integers. Let $G$ be a 
connected
group scheme over $O_S$, and let $G_k$ be the corresponding algebraic group over $k$. Assume that  $G_k$ is absolutely simple and simply connected. Any
group which is abstractly commensurable to such $G(O_S)$ is called an arithmetic group. 
Borel and Harish-Chandra \cite{BHC} proved that the image (under the diagonal embedding) of  
$G(O_S)$ in  $\prod_{v \in S}G(K_v)$ is an irreducible lattice and so every arithmetic group is commensurable to an irreducible lattice in some semisimple group.

%The class of higher rank arithmetic groups has played a central role in various areas of mathematics in the last four decades. Margulis's seminal arithmeticity theorem asserts that every irreducible lattice in a higher rank semisimple group is arithmetic. This theorem is deduced from his celebrated superrigidity results for these groups, strengthening Mostow strong rigidity.

Irreducible higher-rank lattices have many remarkable properties. For example, Margulis's Superrigidity Theorem roughly says that $\Gamma$ (as abstract group) determines $\mathbf{G}$ and the embedding $\Gamma \hookrightarrow \mathbf{G}$ up to automorphisms of $\mathbf{G}$ (see Definition \ref{defn:superrigid} for the accurate statement). Another amazing rigidity result for these groups is the following (for a quick formulation, we assume that $\Gamma$ is non-uniform): If $\Lambda$ is any finitely generated group which is quasi-isometric to $\Gamma$ (i.e., the Cayley graph of $\Lambda$ is quasi-isometric to that of $\Gamma$), then, up to finite index and finite normal subgroups, $\Gamma$ and $\Lambda$ are isomorphic, see  \cite{Far} and the reference therein. 

The main goal of this paper is to show a new rigidity phenomenon for higher rank arithmetic groups. For the formulation, we need the following definitions:

\begin{definition} Two groups are said to be \emph{elementarily equivalent} if every first order sentence in the language of groups that holds in one also holds in the other.
\end{definition} 

Elementary equivalence is fairly weak equivalence relation: every infinite group has an equivalent group of any infinite cardinality. From a group-theoretic perspective, it is reasonable to restrict the discussion to finitely generated groups. Luckily, characteristic zero arithmetic groups are always finitely generated (in fact, finitely presented).

\begin{definition} We say that a finitely generated group $\Gamma$ is \emph{first order rigid} if every finitely generated group that is elementarily equivalent to $\Gamma$ is isomorphic to $\Gamma$.
\end{definition} 

Finitely generated abelian groups are first order rigid. Nilpotent groups need not be first order rigid, but the elementary equivalence classes of any nilpotent group is finite (see Remark \ref{rem:ee.finite}). In general, elementary equivalence classes can be infinite. The celebrated work of Sela \cite{Sel02} (see also \cite{KM}) shows that all non-abelian free groups are elementarily equivalent, and are also equivalent to the fundamental groups of compact surfaces of genera at least two (free groups and fundamental groups of surfaces are all arithmetic groups, but not of higher-rank). 

Our main result says that the situation for higher-rank arithmetic groups is very different.
Recall that two groups are said to be abstractly commensurable if they contain isomorphic finite index subgroups. 

\begin{theorem} \label{thm:main} Any group which is abstractly commensurable to an irreducible non-uniform higher-rank lattice is first order rigid.
\end{theorem}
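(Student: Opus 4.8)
The plan is to reduce the statement to a "definability" problem: show that the isomorphism type of $\Gamma$ can be pinned down inside $\Lambda$ by first-order axioms. Concretely, suppose $\Lambda$ is finitely generated and elementarily equivalent to $\Gamma$. First I would reduce to the case where $\Gamma$ itself is $G(O_S)$ for $G_k$ absolutely simple and simply connected, since abstract commensurability is handled by a separate (standard but delicate) interpretation argument once the $G(O_S)$ case is known. Then the heart of the matter is to find a first-order sentence $\phi$, true in $\Gamma$, such that any finitely generated group satisfying $\phi$ is isomorphic to $\Gamma$. The natural candidate for such a $\phi$ combines: (a) the statement that there is a tuple of elements generating a subgroup isomorphic to a fixed finitely generated nilpotent (or more generally, "large solvable") subgroup $U$ coming from the unipotent radical of a parabolic (this is available precisely because $\Gamma$ is \emph{non-uniform}, hence contains nontrivial unipotents); (b) the statement that this copy of $U$ is, in a first-order expressible sense, "self-centralizing enough" and sits inside a larger definable subgroup recognizable as a copy of $G(O_S)$ or an $O_S$-point group.

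The key steps, in order, would be: (1) Use bounded generation: by results of Carter--Keller, Tavgen, and others, $\Gamma = G(O_S)$ has bounded elementary generation, i.e. every element is a product of a bounded number of elementary (root) unipotents. This is a first-order expressible property relative to a distinguished unipotent element, and it is the crucial input that makes solvability/length phenomena definable. (2) Identify a root subgroup and, via commutator relations among root subgroups (Chevalley relations), recover a definable copy of the additive group of $O_S$, together with its multiplication, hence interpret the ring $O_S$ inside $\Gamma$ by a first-order formula with parameters. (3) Show this ring interpretation is "rigid": any finitely generated group elementarily equivalent to $\Gamma$ interprets, by the same formula, a ring $R$ elementarily equivalent to $O_S$, and because $O_S$ is finitely generated as a ring and "first order rigid as a ring" in the relevant sense (using that finitely generated integral domains that are elementarily equivalent to $O_S$ and arise this way must be isomorphic to $O_S$ — here one leans on the definability of $\Z$ inside $O_S$ and Julia Robinson-type arithmetic definability), one gets $R \cong O_S$. (4) Reconstruct $G(R) = G(O_S)$ from the interpreted ring together with the Steinberg/Chevalley presentation, and use Margulis superrigidity together with the congruence subgroup property to upgrade "$\Lambda$ contains a finite-index copy of $G(O_S)$ recognizable by the formula" to "$\Lambda \cong G(O_S)$," eliminating the possibility of extra finite normal subgroups or non-split extensions by appealing to the centrality and finiteness of the congruence kernel and to $\Gamma$ being its own abstract commensurator.

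The main obstacle I expect is step (4): controlling the "global" group once the ring is recovered. Interpreting $O_S$ and even $G(O_S)$ as a definable set with definable group operation inside $\Lambda$ is essentially formal once bounded generation and Chevalley relations are in hand; but concluding that $\Lambda$ is \emph{isomorphic} to $\Gamma$ — rather than merely containing or surjecting onto $\Gamma$ — requires ruling out, by a first-order sentence, the existence of a proper finite normal subgroup, a nontrivial central extension, or a larger finitely generated overgroup. This is where the non-uniform higher-rank hypotheses must be used decisively: the congruence subgroup property (which gives that every finite-index subgroup is a congruence subgroup, a profinite-rigidity-flavored input), normal subgroup theorem (every normal subgroup is finite and central, or of finite index), and superrigidity together force the ambient group to be exactly $G(O_S)$ up to the finitely many possibilities allowed by $H^2$ and by outer automorphisms, and then a finite disjunction of sentences distinguishes $\Gamma$ from each of those finitely many alternatives. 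Making all of this simultaneously first-order — in particular, bounding quantifier complexity so that elementary equivalence actually transfers the whole package — is the delicate part, and I would expect the proof to spend most of its effort there.
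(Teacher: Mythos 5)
Your proposal is a genuinely different route from the paper's, in the tradition of bi-interpretability with the ring $O_S$ (Nies, Khelif, and others). The paper instead introduces two structural notions — \emph{primeness} and the \emph{Brenner property} — and gives a short, clean argument that avoids any ring interpretation, bounded generation, or the congruence subgroup property in the form you invoke them. Concretely, the paper shows (Theorem~\ref{thm:sr.prime}, via Margulis superrigidity) that $\Gamma$ \emph{prime}: it elementarily embeds into every elementarily equivalent group $\Lambda$. By a compactness-style argument (Proposition~\ref{prop:prime.gp}, using that $\Gamma$ is finitely presented and linear), such an elementary embedding forces a semidirect-product splitting $\Lambda = \Delta \rtimes \Gamma$. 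The Brenner property (proved for non-uniform higher-rank lattices via Raghunathan's work on normal generation, Theorem~\ref{thm:Raghunathan}) then shows that if $\Delta \neq 1$, then $\Delta$ meets $Z(C_\Lambda(b))$ nontrivially; comparing $m$-power quotients across the splitting shows $\Delta \cap Z(C_\Lambda(b))$ is divisible, contradicting residual finiteness (Malcev). That is the whole proof.

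There are two concrete gaps in your proposal worth flagging. First, you claim the reduction from ``$\Phi$ commensurable to $\Gamma$'' to ``$\Gamma = G(O_S)$'' is handled by a ``standard interpretation argument,'' but the paper devotes Section~\ref{sec:commensurable} to showing that first-order rigidity is \emph{not} commensurability invariant in general (a $\Z$-by-finite example fails it). So there is no generic transfer principle; the paper has to prove all of its structural lemmas (primeness, Brenner property, bounded rank of abelian subgroups) directly for groups commensurable to $\Gamma$. Second, bounded elementary generation, which is the engine of your plan, is established for $\SL_n(O_S)$ and split Chevalley groups but is not known (and certainly was not known when the paper was written) for arbitrary absolutely simple $G$; moreover interpreting the ring $O_S$ via Chevalley commutator relations presupposes a split or quasi-split form. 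The paper sidesteps all of this. Where your plan and the paper agree is in the decisive role of non-uniformity (the need for unipotents) and in identifying the ``global upgrade'' — from definable substructure to actual isomorphism — as the crux; but the paper's resolution of that crux (splitting $+$ divisibility vs.\ residual finiteness) is quite different from the CSP/central-extension bookkeeping you gesture at.
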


\begin{remark}\label{rem:connected}  First order rigidity is, in general, not preserved under abstract commensurability, see \S\ref{sec:commensurable}.
\end{remark} 

\begin{remark} 
\emph{(1)} 
Theorem \ref{thm:main} stands in a sharp contrast to lattices in low-rank groups: \begin{enumerate}
\item[{(a)}] By \cite{Sel09}, all torsion-free lattices in $\SL_2(\mathbb{R})$ are elementarily equivalent. 
\item[{(b)}] By \cite[Theorem 7.6]{Sel09}, if $\Gamma$ is torsion-free uniform lattice in a rank-one group, then $\Gamma$ is elementarily equivalent to $\Gamma * F_n$ for all $n \geq 1$.
\end{enumerate}
\noindent 
\emph{2)} By \cite[Proposition 7.1]{Sel09}, two non-isomorphic uniform torsion-free lattices in rank-one groups other than $\SL_2(\mathbb{R})$ are never elementarily equivalent. We do not know what happens for non-uniform lattices.
\end{remark}

\begin{remark} We can prove that many irreducible non-uniform higher-rank arithmetic lattices in positive characteristics are first order rigid and we 
 believe that all of them are. While we speculate that higher rank uniform lattices are also first order rigid, we do not have a single example where we can prove it. \end{remark}

\begin{remark} In a sequel to this article we show that for many higher-rank lattices $\Gamma$, there is a single statement $\phi_\Gamma$ such that, if $\Lambda$ is a finitely generated group, then $\Lambda$ satisfies $\phi_\Gamma$ if and only if $\Lambda$ is isomorphic to $\Gamma$.
\end{remark}

The paper is organized as follows: Section \ref{sec:preliminaries} contains some preliminaries, including the crucial definitions of a prime group and the Brenner property. In the same section, we also show that $\SL_n(\mathbb{Z})$ is prime and has an element satisfying the Brenner property. In Section \ref{sec:main.proof}, we prove that a prime group with a finite center that has an element with the Brenner property is first-order rigid. This finishes the proof of rigidity for $\SL_n(\mathbb{Z})$. In Section \ref{sec:prime} we show that superrigid arithmetic groups are prime and in Section \ref{sec:Raghunathan} we show that irreducible higher-rank non-uniform lattices have elements with the Brenner property, finishing the proof of Theorem \ref{thm:main}.  Finally, in Section \ref{sec:commensurable} we show that first-order rigidity is, in general, not preserved under commensurability.
\\ \\
\emph{This article is dedicated to the memory Daniel G. Mostow who is  the founding father of modern rigidity. Dan was a role model and inspiration for us, professionally and personally. }
\\ \\
{\bf Acknowledgement:} The authors are grateful to Zlil Sela for fruitful conversations and insights which improved the original proof. 
We are also thankful to Goulnara Arzhantseva, Andre Nies, Andrei Rapinchuk and Tyakal Nanjundiah Venkataramana for pointing out to us several background references. 
The first author was partially support by NSF grant no. DMS-1303205 and BSF grant no. 2012247. 
The second author was partially support by ERC, NSF and BSF. The third author was partially supported by ISF grant no. 662/15
and BSF grant no. 2014099.

\section{Preliminaries} \label{sec:preliminaries}

The following is a theorem of Malcev. For completeness, we include a proof.

\begin{proposition}[\cite{Mal}] \label{prop:res.finite} If $\Lambda$ is a group that is elementarily equivalent to a linear group, then $\Lambda$ is linear. If, in addition, $\Lambda$ is finitely generated, then $\Lambda$ is residually finite.
\end{proposition}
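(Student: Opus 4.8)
The plan is to express ``being linear'' in a way that is preserved under elementary equivalence, and then use a theorem of Malcev to go from linear plus finitely generated to residually finite. The subtlety is that ``$\Lambda$ embeds into $\GL_n(K)$ for some field $K$'' is not obviously a first-order property of $\Lambda$, since it quantifies over the unknown field $K$ and over an embedding. So the first step is to reformulate linearity in terms that can be read off from the first-order theory of the group.

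First I would recall the standard fact that a group $\Gamma$ is linear over some field of characteristic zero (equivalently, over some field, after treating the positive characteristic case separately, but here everything is characteristic zero) if and only if $\Gamma$ embeds into $\GL_n(R)$ for some finitely generated integral domain $R$, which in turn embeds (via specialization/Nullstellensatz arguments) into $\GL_n$ of a number field, hence into $\GL_n(\C)$. The key point from model theory is a compactness/ultraproduct argument: if $\Lambda \equiv \Gamma$ and $\Gamma \hookrightarrow \GL_n(K)$, then one wants to build a field into which $\Lambda$ embeds. The cleanest route is: realize $\Lambda$ as a subgroup of an ultrapower $\prod_{\mathcal U}\Gamma$ (Keisler–Shelah, or a direct ultraproduct argument using that $\Lambda\equiv\Gamma$ and $\Lambda$ is finitely generated — in fact one can take a suitable elementary extension of $\Lambda$ containing a copy of $\Gamma$, or use that both embed into a common ultrapower). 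Since $\Gamma \hookrightarrow \GL_n(K)$, the ultrapower $\prod_{\mathcal U}\Gamma$ embeds into $\GL_n\big(\prod_{\mathcal U} K\big)$, and $\prod_{\mathcal U} K$ is again a field. Therefore $\Lambda$, being a subgroup of $\prod_{\mathcal U}\Gamma$, is linear over the field $\prod_{\mathcal U} K$. This proves the first assertion.

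For the second assertion, assume in addition that $\Lambda$ is finitely generated. Then $\Lambda$ is a finitely generated linear group, and by Malcev's theorem finitely generated linear groups are residually finite. (I would include the short proof of this: if $\Lambda \le \GL_n(L)$ is finitely generated, the finitely many matrix entries of the generators and their inverses lie in a finitely generated subring $R \subseteq L$; for each nonidentity $\gamma \in \Lambda$ pick an entry of $\gamma - \Id$ that is a nonzero element $r \in R$; since $R$ is a finitely generated domain it is residually finite as a ring, so there is a maximal ideal $\mathfrak m$ with $r \notin \mathfrak m$, and $R/\mathfrak m$ is a finite field; reduction mod $\mathfrak m$ gives a homomorphism $\GL_n(R) \to \GL_n(R/\mathfrak m)$ to a finite group not killing $\gamma$.) This yields residual finiteness of $\Lambda$.

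The main obstacle — and the only place real work is needed — is the first step: making precise the passage from ``$\Lambda \equiv \Gamma$'' to ``$\Lambda$ sits inside an ultrapower (or elementary extension) of $\Gamma$'' and checking that linearity transfers through the ultraproduct. One has to be a little careful that $\prod_{\mathcal U} K$ is genuinely a field (it is, since being a field is a first-order property of rings, so any ultraproduct of fields is a field), and that the dimension $n$ is uniform across the factors (it is, since it is fixed by $\Gamma$). Everything after that, including the Malcev residual finiteness argument, is routine commutative algebra.
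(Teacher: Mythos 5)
Your argument is correct, but it takes a different route from the paper's. The paper works in the first-order language of rings augmented by constants $c^{\alpha}_{i,j}$ for the matrix entries of each element of $\Lambda$, writes down a theory asserting that these constants define an injective representation of $\Lambda$ satisfying all its relations, checks finite satisfiability using elementary equivalence with the linear group $\Phi\subset\GL_n(k)$, and extracts the field $K$ and the embedding $\Lambda\hookrightarrow\GL_n(K)$ directly from the Compactness Theorem. You instead stay in the language of groups and invoke Frayne/Keisler--Shelah to embed $\Lambda$ into an ultrapower $\Gamma^{\mathcal U}\le\GL_n(K)^{\mathcal U}=\GL_n(K^{\mathcal U})$, using that an ultrapower of a field is a field. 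The two arguments are of course two faces of the same compactness phenomenon, but yours has the advantage of quoting a black box (``elementarily equivalent structures embed into isomorphic ultrapowers'') and making the transfer of linearity transparent, while the paper's is self-contained modulo the Compactness Theorem and avoids any set-theoretic overhead about index sets for the ultrafilters. Two small remarks: your opening detour about characteristic zero is unnecessary (the statement and both proofs are characteristic-free, and your ultrapower argument never uses the characteristic), and of the several justifications you list for embedding $\Lambda$ into an ultrapower of $\Gamma$, the one about an elementary extension of $\Lambda$ containing a copy of $\Gamma$ points the wrong way; stick with Keisler--Shelah ($\Lambda\hookrightarrow\Lambda^{\mathcal U}\cong\Gamma^{\mathcal V}$) or Frayne's lemma. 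The residual finiteness step is identical to the paper's, which likewise reduces to the residual finiteness of finitely generated domains.
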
 

\begin{proof} Let $\Phi \subset \GL_n(k)$ be a linear group which is elementary equivalent to $\Lambda$. Enumerate the elements of $\Lambda$ as $\Lambda=\left\{ \lambda_\alpha \right\}_{\alpha \in A}$, and enumerate all relations that hold between the $\lambda_\alpha$s by $\left\{ r_\beta (\lambda_\alpha) \right\}_{\beta \in B}$. Let $\mathcal{L}$ be the first-order language of rings together with constants $c^\alpha_{i,j}$, for $\alpha \in A$ and $1 \leq i,j\leq n$. Consider the theory $\mathcal{T}$ consisting of the following statements: \begin{enumerate} 
\item The axioms of fields.
\item The statements $\det(c^{\alpha}_{i,j}) \neq 0$, for all $\alpha \in A$.
\item The statements $(c^{\alpha}_{i,j}) \neq (c^{\beta}_{i,j})$, for all $\alpha,\beta \in A$.
\item The statements $r_\beta ((c^{\alpha}_{i,j}))=1$, for all $\beta \in B$.
\end{enumerate} 

If $\mathcal{S}$ is a finite subset of $\mathcal{T}$, then there is a finite set $A_0 \subset A$ such that $\mathcal{S}$ involves only $c^{\alpha}_{i,j}$ for $\alpha \in A_0$. Since the elements $\lambda_\alpha$, $\alpha \in A_0$ satisfy all relations $r_\beta$ that involve only them, we get that there are elements $(g^{\alpha}_{i,j})\in \Phi \subset \GL_n(k)$ that satisfy $\mathcal{S}$. In particular, $\mathcal{S}$ is consistent. By the Compactness Theorem, there is a model $K$ of $\mathcal{T}$. This $K$ must be a field, and the map $\lambda ^ \alpha \mapsto (c^ \alpha _{i,j})$ is an embedding $\Lambda \hookrightarrow \GL_n(K)$.

Finally, we show that the second claim follows from the first. If $\Lambda$ is finitely generated and linear, then there is a finitely generated ring $A$ such that $\Lambda \subset \GL_n(A)$. Since a finitely generated ring is residually finite, it follows that $\Lambda$ is residually finite.
\end{proof} 

\begin{definition} A homomorphism $f: \Gamma \rightarrow \Lambda$ is called an \emph{ elementary embedding} if, for every first order formula $\phi(\vec{x})$ with $n$ free variables and every $\vec{a}\in\Gamma^n$, the statement $\phi(\vec{a})$ holds in $\Gamma$ if and only if $\phi(f(\vec{a}))$ holds in $\Lambda$.
\end{definition} 

\begin{definition} We say that a group $\Gamma$ is \emph{prime} if, for every group $\Lambda$ that is elementary equivalent to $\Gamma$, there is an elementary embedding $\Gamma \hookrightarrow \Lambda$.
\end{definition} 

The following is proved by Oger and Sabbagh:

\begin{theorem}[\cite{OS}] \label{thm:prime.criterion} Let $\Gamma$ be a finitely generated group. The following are equivalent: \begin{enumerate} 
\item $\Gamma$ is prime.
\item There is a generating tuple $\vec{g}\in \Gamma^n$ and a formula $\phi(\vec{x})$ such that, for any $n$-tuple $\vec{h}\in \Gamma^n$, the statement $\phi(\vec{h})$ holds in $\Gamma$ if and only if $\vec{h}$ is in the $\Aut(\Gamma)$ orbit of $\vec{g}$.
\end{enumerate} 
\end{theorem}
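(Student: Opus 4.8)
The plan is to prove the two implications separately, using the fact that $\Gamma$ is finitely generated throughout. The easier direction is $(2)\Rightarrow(1)$. Suppose $\Lambda$ is elementarily equivalent to $\Gamma$. Fix the generating tuple $\vec g$ and the formula $\phi$ from $(2)$. Since $\Gamma$ is finitely generated by $\vec g$, the isomorphism type of $\Gamma$ together with the tuple $\vec g$ is captured as follows: let $\{r_\beta(\vec x)\}$ enumerate the relations $r_\beta(\vec g)=1$ that hold in $\Gamma$; then a homomorphism out of $\Gamma$ sending $\vec g$ to a tuple $\vec h$ in another group is determined by $\vec h$, and is well defined exactly when $\vec h$ satisfies all the $r_\beta$. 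The statement ``$\exists \vec x\, \phi(\vec x)$'' holds in $\Gamma$, hence in $\Lambda$, so pick $\vec h\in\Lambda^n$ with $\Lambda\models\phi(\vec h)$. I want to produce a homomorphism $f\colon\Gamma\to\Lambda$ with $f(\vec g)=\vec h$; for this I must check $\vec h$ satisfies every relation $r_\beta$. This is the one subtle point: there may be infinitely many relations, so I cannot directly transfer them by elementary equivalence. Instead I use that $\phi$ pins down $\vec g$ up to $\Aut(\Gamma)$: the set of tuples in $\Gamma$ satisfying a given word relation $r_\beta$ is $\Aut(\Gamma)$-invariant and contains $\vec g$, hence contains every tuple satisfying $\phi$; formally, for each $\beta$ the sentence $\forall\vec x\,(\phi(\vec x)\to r_\beta(\vec x)=1)$ holds in $\Gamma$, hence in $\Lambda$, so $r_\beta(\vec h)=1$ in $\Lambda$. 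Thus $f$ is a well-defined homomorphism, and it is injective because $\Lambda$ being elementarily equivalent to $\Gamma$ also satisfies, for each nontrivial word $w(\vec g)\neq 1$ in $\Gamma$, the sentence $\forall\vec x\,(\phi(\vec x)\to w(\vec x)\neq 1)$. Finally I must upgrade $f$ from an embedding to an \emph{elementary} embedding: given any formula $\psi(\vec y)$ and $\vec a\in\Gamma^n$, express each coordinate of $\vec a$ as a word $w_i(\vec g)$; then $\Gamma\models\psi(w_1(\vec g),\dots)$ is equivalent to $\Gamma\models\forall\vec x(\phi(\vec x)\to\psi(w_1(\vec x),\dots))$, which transfers to $\Lambda$ and there specializes at $\vec h$ to $\Lambda\models\psi(f(\vec a))$; the reverse implication is symmetric using $\exists\vec x(\phi(\vec x)\wedge\neg\psi(\dots))$.

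For $(1)\Rightarrow(2)$, I would argue by contradiction using a compactness / ultrapower construction. Fix a finite generating tuple $\vec g$ of $\Gamma$. Assume no formula $\phi$ isolates the $\Aut(\Gamma)$-orbit of $\vec g$. The idea is to build a group $\Lambda\equiv\Gamma$ together with an elementary embedding $i\colon\Gamma\hookrightarrow\Lambda$ such that $i(\vec g)$ lies in no $\Aut(\Lambda)$-orbit that could come from an elementary self-embedding, contradicting primeness. Concretely, enlarge the language by a fresh $n$-tuple of constants $\vec c$, and consider the theory $T$ consisting of: the full elementary diagram of $\Gamma$ (in the generators $\vec g$), together with, for every formula $\phi(\vec x)$ that holds of $\vec g$ in $\Gamma$, a new axiom asserting $\phi(\vec c)$, \emph{and} for every such $\phi$ the negation of ``$\vec c$ is in the $\Aut$-orbit of $\vec g$'' — which, since orbit membership is not first-order in general, must instead be encoded as: $\vec c$ generates, $\vec c$ satisfies all the relations of $\vec g$ (so there is a surjection $\Gamma\to\la\vec c\ra$), but the map $\vec g\mapsto\vec c$ is not injective, i.e., for some word $w$ with $w(\vec g)\neq 1$ we have $w(\vec c)=1$. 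The failure hypothesis — that no $\phi$ isolates the orbit — is exactly what guarantees finite satisfiability of $T$: given finitely many formulas $\phi_1,\dots,\phi_k$ true of $\vec g$, their conjunction does not isolate the orbit, so there is a tuple $\vec h$ in $\Gamma$ satisfying all $\phi_j$ but not in $\Aut(\Gamma)\vec g$; if $\vec h$ generates $\Gamma$ then the map $\vec g\mapsto\vec h$ fails to be an automorphism, hence (being a surjective endomorphism of a group that need not be Hopfian a priori — here one uses residual finiteness, or rather one observes directly) is non-injective, so $\vec h$ witnesses the finite fragment with $\vec c:=\vec h$. By compactness $T$ has a model, which is the desired $\Lambda$ with a distinguished tuple $\vec c$; the inclusion of the elementary diagram makes $\Gamma\prec\Lambda$, and any elementary embedding $j\colon\Gamma\hookrightarrow\Lambda$ would have to send $\vec g$ to a tuple satisfying exactly the formulas $\vec g$ does, hence (by how we chose $\vec c$ versus the obstruction) cannot be reconciled with the structure we built — this yields the contradiction with $(1)$.

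The main obstacle, and the place where genuine care is needed, is the $(1)\Rightarrow(2)$ direction: the notion ``$\vec h$ is in the $\Aut(\Gamma)$-orbit of $\vec g$'' is \emph{not} a priori first-order expressible, so the compactness argument must be set up so that the contradiction is extracted without ever quantifying over automorphisms inside the object language. The trick is that for finitely generated groups, ``$\vec h$ is in the $\Aut(\Gamma)$-orbit of $\vec g$'' is equivalent to the conjunction of ``$\vec h$ generates'' and ``$\vec h$ satisfies every defining relation of $\vec g$'' and ``$\vec g$ satisfies every defining relation of $\vec h$'' — the last two being (infinite) sets of first-order conditions, and one exploits that a non-orbit tuple must fail one of countably many such word conditions. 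One then needs the Löwenheim--Skolem or ultrapower step to land inside a \emph{finitely generated} elementarily equivalent group, since primeness is about those; alternatively, and more cleanly, I would phrase the whole argument using the Oger--Sabbagh observation that for finitely generated $\Gamma$ the type of a generating tuple is determined by countably many relation-formulas, so that ``isolated orbit'' is equivalent to ``isolated type among generating tuples'', reducing $(1)\Leftrightarrow(2)$ to the standard model-theoretic fact that a finitely generated structure is prime (atomic + countable realizing its type) iff the type of some generating tuple is isolated.
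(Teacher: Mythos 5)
Your $(2)\Rightarrow(1)$ direction is correct and well executed: the observation that $\phi$ pins down $\vec g$ up to $\Aut(\Gamma)$ lets you transfer all relations, non-relations, and indeed all formulas via sentences of the form $\forall\vec x\,(\phi(\vec x)\to\cdots)$ and $\exists\vec x\,(\phi(\vec x)\wedge\cdots)$, giving an elementary embedding $\Gamma\to\Lambda$.

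The $(1)\Rightarrow(2)$ direction, however, is broken as written. The theory $T$ you describe is inconsistent: you require $\vec c$ to satisfy every formula $\phi(\vec x)$ true of $\vec g$, but among those formulas are all the statements $w(\vec x)\neq 1$ for words $w$ with $w(\vec g)\neq 1$, which contradicts your final clause that $w(\vec c)=1$ for some such $w$ (and that clause is an infinitary disjunction, not a first-order axiom in any case). More fundamentally, since $T$ contains the full elementary diagram of $\Gamma$, every model of $T$ has $\Gamma$ as an elementary substructure, so the elementary embedding you are hoping to rule out exists by construction; the compactness argument is set up in the wrong direction. To refute primeness under $\neg(2)$ you need a model that \emph{omits} the type of $\vec g$ (via the omitting types theorem, using that a non-isolated type can be omitted), not one that realizes it twice. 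The ``cleaner'' route you gesture at at the end — $\Gamma$ prime iff $\Gamma$ is a countable atomic model iff the type of a generating tuple is isolated — is the right path, but you leave it as a gesture and, crucially, it omits the one subtlety that actually needs an argument: condition $(2)$ speaks of the $\Aut(\Gamma)$-\emph{orbit} of $\vec g$, whereas ``isolated type'' speaks of the set of tuples realizing the same type, and these coincide only because countable atomic models are homogeneous (two tuples of the same type are conjugate by an automorphism). That homogeneity step must be stated and used. Finally, your worry that you must ``land inside a finitely generated'' $\Lambda$ is misplaced: the definition of prime in the paper quantifies over all groups $\Lambda\equiv\Gamma$, not only finitely generated ones, so no Löwenheim--Skolem reduction is needed.
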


\begin{example} \label{expl:SLn} $\SL_n(\mathbb{Z})$, $n \geq 3$, is prime: We use the following consequence of superrigidity: Any endomorphism of $\SL_n(\mathbb{Z})$ is either trivial or an automorphism.

Fix a finite presentation $\langle g_1,\ldots,g_a \mid r_1,\ldots,r_b \rangle$ of $\SL_n(\mathbb{Z})$, and let $\phi(x_1,\ldots,x_a)$ be the formula
\[
\phi(\vec{x})=(x_1 \neq 1) \wedge \bigwedge_{j=1}^b (r_j(\vec{x})=1).
\]
If $\vec{h}\in \left( \SL_n(\mathbb{Z}) \right) ^a$ and $\phi(\vec{h})$ holds, then the map $g_i \mapsto h_i$ extends to a non-trivial endomorphism of $\Gamma$, so it must be an automorphism, so $\vec{h}$ is a generating tuple.
\end{example}

\begin{notation}\label{not:setpower} For a set $S \subset \Gamma$ and $n \geq 1$, let $[S]^n=\left\{ g_1 \cdots g_n \mid g_i\in S\cup \left\{ 1 \right\} \right\}$.
\end{notation}

\begin{definition} We say that an element $g\in \Gamma$ has the \emph{\BP} if there exists a constant $D \geq 1$ for which the following statement hold:
\[
\text{For every $h \in \Gamma$, if $|[h^\Gamma \cup (h ^{-1})^\Gamma]^D | > D$ then $[h^\Gamma \cup (h ^{-1})^\Gamma]^D \cap Z(C_\Gamma(g)) \neq \left\{ 1 \right\}$. }
\]
\end{definition} 

\begin{remark}\label{rem:BP} 
Let $h \in \Gamma$ and denote $S=h^\Gamma \cup (h ^{-1})^\Gamma$.
We claim that  if $|[S]^D | \le  D$ for some $D \ge 1$ then $[S]^D$ is the normal subgroup of $\Gamma$ generated by $h$. and in particular $[S]^D=[S]^C$
for every $C \ge D$. The proof of the claim is by induction on $D$. Note that $|[S]|^1 \le  1$ if and only if $h=e$ so the claim holds for $D=1$. Assume that $h \ne e$ and that $|[S]^D | \le  D$ for some $D \ge 2$.  Define $T_m:=[S]^m$
and $t_m:= |T_m| $ for every  $1 \le m \le D$. Note that 
$ T_1 \subseteq T_2 \subseteq \cdots\subseteq T_D$
and $2 \le t_1 \le t_2 \le \cdots \le t_D=D$. Thus, there exists $1 \le m < D$ for which $t_{m}=t_{m+1}$ and $T_m=T_{m+1}$. It easily follows that $T_m$ is the normal subgroup generated  
by $h$. 

Note that the claim implies that  if $h \in \Gamma$ is not contained in any finite normal subgroup then $|[S]^D | >  D$ for every $D \ge 1$. 
\end{remark}

Let $e_{i,j}\in \SL_n(\mathbb{Z})$ be the elementary matrix with 1s on the diagonal and entry $(i,j)$ and zero elsewhere.

\begin{lemma}[\cite{Bre}] \label{lem:Brenner} Let $n \ge 3$. Then $e_{1,n}$ has the \BP in $\SL_n(\Z)$.
\end{lemma}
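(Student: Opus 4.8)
The plan is to analyze the conjugacy class structure of $e_{1,n}$ in $\SL_n(\Z)$ and verify the Brenner property directly. First I would identify the centralizer $C_\Gamma(e_{1,n})$ and its center: since $e_{1,n}$ is a single long-root element (a transvection), its centralizer consists of matrices preserving the relevant flag data, and one can check that the center $Z(C_\Gamma(e_{1,n}))$ is precisely the unipotent subgroup $\{e_{1,n}^k \mid k \in \Z\} \cong \Z$ (or at worst a slightly larger abelian unipotent group, but the key point is it contains all the $e_{1,n}^k$). So the target set $Z(C_\Gamma(g))\setminus\{1\}$ that we must hit is essentially $\{e_{1,n}^k : k \neq 0\}$.

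Next, given an arbitrary $h \in \Gamma = \SL_n(\Z)$ with $S = h^\Gamma \cup (h^{-1})^\Gamma$ and $|[S]^D| > D$, I want to produce a nontrivial element of $[S]^D \cap Z(C_\Gamma(g))$ for a uniform constant $D$ depending only on $n$. By Remark \ref{rem:BP}, the hypothesis $|[S]^D| > D$ (once $D$ is large enough) forces $h$ to not lie in any finite normal subgroup; since $\SL_n(\Z)$ has finite center and the normal subgroup generated by $h$ is either central (hence finite) or of finite index by Margulis normal subgroup theorem / the classical Bass–Milnor–Serre results, the normal closure of $h$ must be a finite-index subgroup. This is where the real content lies: I would invoke a result of the type ``bounded generation of congruence subgroups by a single conjugacy class'' — more precisely, Brenner's theorem (this is exactly \cite{Bre}), which gives a bound $N = N(n)$ such that any nontrivial normal subgroup $N_h \trianglelefteq \SL_n(\Z)$, being of finite index, contains a nontrivial elementary-type element $e_{1,n}^k$ expressible as a product of boundedly many conjugates of $h^{\pm 1}$. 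Concretely: the normal closure of $h$ contains some power $e_{i,j}^m$ for appropriate $i,j,m$ (this uses that finite-index subgroups of $\SL_n(\Z)$, $n\ge 3$, contain congruence subgroups, hence contain $e_{i,j}^m$ for some $m$), and then conjugating by a permutation matrix moves $e_{i,j}^m$ to $e_{1,n}^{m}$, which lies in $Z(C_\Gamma(e_{1,n}))$.

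The main obstacle — and the heart of Brenner's argument — is obtaining the \emph{uniform} bound $D$: one needs that $e_{1,n}^k$ can be written as a product of at most $D$ conjugates of $h$ and $h^{-1}$ where $D$ does not depend on $h$. The naive approach (take the normal closure, note it has finite index, hence contains a congruence subgroup $\Gamma(m)$, hence contains $e_{1,n}^m$) produces $m$ and a number of conjugates both depending on $h$. The fix is that we do not need to stay inside the normal closure of $h$ using few conjugates to reach a \emph{specific} power; rather, once $[S]^D$ is large (size $> D$), an easy pigeonhole/growth argument as in Remark \ref{rem:BP} combined with commutator identities in $\SL_n$ lets one manufacture a short product of conjugates of $h^{\pm 1}$ equal to a commutator $[x, y e_{1,n}^k y^{-1}]$-type expression, and the Steinberg relations in $\SL_n(\Z)$, $n \ge 3$, express elementary unipotents as commutators, collapsing everything into $\langle e_{1,n}\rangle$ after a bounded number of multiplications. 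So the plan reduces to: (i) compute $Z(C_\Gamma(e_{1,n})) \supseteq \langle e_{1,n}\rangle$; (ii) use Remark \ref{rem:BP} to reduce to the case where the normal closure of $h$ is nontrivial and not finite; (iii) apply Brenner's bounded-conjugate-generation theorem to land a nontrivial power of $e_{1,n}$ in $[S]^D$ for $D = D(n)$; I expect step (iii), the uniform bound, to be the crux, and it is exactly the content cited to \cite{Bre}.
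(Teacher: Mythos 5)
Your setup is fine: reducing to non-central $h$, noting that $Z(C_\Gamma(e_{1,n}))$ contains $\langle e_{1,n}\rangle$, and observing that the sets $[S]^m$ are conjugation-invariant (so it suffices to hit a conjugate of a power of $e_{1,n}$) all match what is needed. You have also correctly located the crux: a bound $D$ on the number of conjugates of $h^{\pm1}$ that is \emph{independent of} $h$. But that crux is exactly the step you do not supply. The route through the normal subgroup theorem and the congruence subgroup property produces, as you yourself note, a level $m$ and a word length that both depend on $h$; and the proposed fix --- a ``pigeonhole/growth argument combined with commutator identities and the Steinberg relations'' --- is not an argument. Remark \ref{rem:BP} only tells you that $[S]^D$ is not a small normal subset; it does not manufacture any short expression for a power of $e_{1,n}$. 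Deferring the uniform bound entirely to the citation leaves the lemma unproved, and the heavy machinery you invoke along the way (Margulis, CSP) plays no role in the actual proof.

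The actual argument is an explicit commutator computation taking place inside $[S]^8$. Since $h$ is non-central and the $e_{r,s}$ generate $\SL_n(\Z)$, some $h^*:=[e_{r,s},h]\in[S]^2$ is non-trivial; because $e_{r,s}$ and $he_{r,s}^{-1}h^{-1}$ each fix a hyperplane of $\Q^n$, the fixed space of $h^*$ has codimension at most $2$. Hence $h^*$ is conjugate in $\SL_n(\Z)$ to a block matrix $\left(\begin{smallmatrix} I_{n-2} & A\\ 0 & B\end{smallmatrix}\right)$ with $B\in\SL_2(\Z)$ and $A\neq 0$ or $B \neq I_2$. One further commutator with a matrix of the same block shape yields a non-trivial element $\left(\begin{smallmatrix} I_{n-2} & A''\\ 0 & I_2\end{smallmatrix}\right)\in[S]^4$, and (if this is not already supported on a single column) one more commutator with $e_{n-1,n}$ produces, in $[S]^8$, a non-identity unipotent differing from the identity only in the last column; such an element is conjugate to $e_{1,n}^k$ for some $k\neq 0$, and conjugation-invariance of $[S]^8$ finishes the proof. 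The uniform constant is thus obtained by pure linear algebra, with no appeal to finite-index considerations at all; this is the idea missing from your proposal.
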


\begin{proof} Denote $\Gamma=\SL_n(\Z)$. 
Since the center of $G$ is finite it is enough to show that there is a constant $C$ such that for every $h \in \Gamma \setminus Z(\Gamma)$, 
$[h^\Gamma \cup (h ^{-1})^\Gamma]^C \cap Z(C_\Gamma(e_{1,n})) \neq \left\{ 1 \right\}$.

Let $h \in \Gamma \setminus Z(\Gamma)$ and define $S:=h^\Gamma \cup (h ^{-1})^\Gamma$. 
For every $k \ge 1$, $[S]^k$ is a symmetric normal subset. Thus, if $t \in [S]^k$ and $q \in \Gamma$ then  $t^\Gamma \cup (t^{-1})^\Gamma
\subseteq [S]^k$ and $[t,q]:=tqt^{-1}q^{-1} \in [S]^{2k}$.
For a matrix $t \in \SL_n(\Q)$ let $V_t:=\{v \in \Q^n\ \mid tv=v\}$. 
As $\SL_n(\Z)=\la e_{i,j}\mid 1 \le i \ne j \le n \ra$, there exists $1 \le r \ne s \le n$ such that $1 \ne h^*:=[e_{r,s},h] \in [S]^2$. Since  $\dim(V_{e_{r,s}})=\dim(V_{he_{r,s}^{-1}h^{-1}})=n-1$, 
we get  $n-2 \le \dim(V_{h^*}) \le n-1$.
By the structure theorem of finitely generated abelian groups, there exist $0\neq A \in M_{n-2 \time 2 }(\Z)$ and $B \in \SL_{2 }(\Z)$
such that $h$ is conjugate in $\SL_n(\Z)$ to  $$h^{**}=\left(\begin{array}{cc} I_{n-2} & A \\
0 & B \end{array}\right) \in [S]^2.$$
By considering the cases $B=\pm I_2$ and $B \ne \pm I_2$ separately,  it is easy to see that there exist
$0 \ne A',A'' \in M_{n-2 \time 2 }(\Z)$ and  $B' \in \SL_{2 }(\Z)$  such that
$$1 \ne h^{***}:=\left
[\left(\begin{array}{cc} I_{n-2} & A \\
0 & B \end{array}\right),
\left(\begin{array}{cc} I_{n-2} & A' \\
0 & B' \end{array}\right)\right]=
\left(\begin{array}{cc} I_{n-2} & A'' \\
0 & I_2 \end{array}\right) \in [S]^4.$$
If $h^{***} $ differs from the identity matrix only in the last column then $h^{***}$ is conjugate to $e_{1,n}^k$ for some $k \ne 0$. Otherwise,
$h^{****}:=[h^{**},e_{n-1,n}] \in [S]^8$ is a non-identity matrix which differs from the identity matrix only in the last column.
\end{proof}

\section{Primeness and Brenner property imply first order rigidity} \label{sec:main.proof}

\begin{proposition}\label{prop:prime.gp} Let $\Gamma$ be a finitely generated group with finite center
which has a maximal finite normal subgroup N (i.e., any finite normal subgroup is contained in N). 
Suppose that $\Lambda$ is a finitely generated group, $i:\Gamma \rightarrow \Lambda$ is an elementary embedding, and $b\in \Gamma$. Then
\begin{enumerate}
\item \label{cond:prime.gp.1} $i(Z(\Gamma))=Z(\Lambda)$.
\item \label{cond:prime.gp.2} $i(Z(C_{\Gamma}(b)))=Z(C_{\Lambda}(i(b)))\cap i(\Gamma)$.
\item \label{cond:prime.gp.4} $i(N)$ is a maximal finite normal subgroup of $\Lambda$. In particular, any non-trivial finite normal subgroup of $\Lambda$ is contained in $i(\Gamma)$.
\item \label{cond:prime.gp.3} If $\Lambda$ is finitely presented or $\Gamma$ is linear then, for every $t\in \Lambda \smallsetminus \left\{ 1 \right\}$, there is $\Delta_t \nom \Lambda$ such that $\Lambda=\Delta_t \mathbb{o} i(\Gamma)$ and $t \not \in \Delta_t$. In particular, $Z(C_\Lambda(i(b)))$ is the direct sum of $Z(C_\Lambda(i(b)))\cap \Delta_t$ and $i(Z(C_\Gamma(b)))$.
\end{enumerate}
\end{proposition}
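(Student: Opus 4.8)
The plan is to exploit three features of the elementary embedding $i$: it is injective, $\Gamma$ and $\Lambda$ are elementarily equivalent, and each subgroup in sight is first-order definable — uniformly, or with the obvious parameter — so that membership statements transport along $i$. I would dispose of~\eqref{cond:prime.gp.1} and~\eqref{cond:prime.gp.2} first, as they are immediate. For~\eqref{cond:prime.gp.1}: the centre is defined by $\forall y\,(xy=yx)$, so $i(Z(\Gamma))\subseteq Z(\Lambda)$; as $Z(\Gamma)$ is finite, the sentence ``there are exactly $|Z(\Gamma)|$ central elements'' holds in $\Gamma$, hence in $\Lambda$, and injectivity of $i$ forces $Z(\Lambda)=i(Z(\Gamma))$. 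For~\eqref{cond:prime.gp.2}: the formula $\psi(z,b):=(zb=bz)\wedge\forall x\big((xb=bx)\to(zx=xz)\big)$ defines $Z(C_\Gamma(b))$ with parameter $b$, and pulling it back along $i$ in both directions (using that $i(\Gamma)$ is the image of $i$) gives the stated equality.

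For~\eqref{cond:prime.gp.4} the obstacle is that ``$x$ lies in a finite normal subgroup'' is not first-order, so I would replace it by the scheme $\{\theta_\ell\}_{\ell\ge1}$ where, for each fixed $\ell$, $\theta_\ell(x)$ expresses $|[x^G\cup(x^{-1})^G]^\ell|\le\ell$ (first-order, since membership in $[S]^\ell$ is existential). By the argument of Remark~\ref{rem:BP}, in any group $G$ one has $G\models\theta_\ell(g)$ iff the normal closure of $g$ is finite of order $\le\ell$. Put $k=|N|$. Since $N$ is the \emph{maximal} finite normal subgroup of $\Gamma$, every finite normal subgroup of $\Gamma$ has order $\le k$, so $\Gamma\models\forall x\,(\theta_\ell(x)\to\theta_k(x))$ for every $\ell$, and $\theta_k$ defines exactly $N$ in $\Gamma$. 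Transporting these (countably many) sentences along $i$ yields, in $\Lambda$: the set defined by $\theta_k$ has exactly $k$ elements and contains $i(N)$, hence equals $i(N)$; this set is a subgroup (image of $N$) and is conjugation-invariant (conjugating $x$ leaves $[x^\Lambda\cup(x^{-1})^\Lambda]^k$ unchanged), hence a finite normal subgroup of $\Lambda$; and if $M\nom\Lambda$ is finite of order $\ell$, every $g\in M$ satisfies $\theta_\ell(g)$, hence $\theta_k(g)$, so $M\subseteq i(N)$. Thus $i(N)$ is a maximal finite normal subgroup of $\Lambda$ and every finite normal subgroup of $\Lambda$ lies in $i(\Gamma)$.

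For~\eqref{cond:prime.gp.3}, fix generators $\Lambda=\langle\lambda_1,\dots,\lambda_c\rangle$. The hypothesis supplies finitely many words $q_1,\dots,q_M$ in $c$ variables such that, for $\vec z\in\Lambda^c$, the assignment $\lambda_i\mapsto z_i$ extends to a homomorphism out of $\Lambda$ iff $q_l(\vec z)=1$ for all $l$: the defining relators if $\Lambda$ is finitely presented; and, if $\Gamma$ (hence $\Lambda$, by Proposition~\ref{prop:res.finite}) is linear, a finite subsystem of the relations among the $\lambda_i$, since a finitely generated linear group is equationally Noetherian (the Zariski topology on $\GL_n^c$ is Noetherian). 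Fixing words with $i(g_j)=p_j(\vec\lambda)$ for a generating tuple $g_1,\dots,g_a$ of $\Gamma$, and, given $t\in\Lambda\smallsetminus\{1\}$, a word with $t=u(\vec\lambda)$, I observe that $\vec\lambda$ witnesses in $\Lambda$ the first-order statement, with parameters $i(g_j)$, $\exists\vec z\,\big(\bigwedge_l q_l(\vec z)=1\wedge\bigwedge_j p_j(\vec z)=i(g_j)\wedge u(\vec z)\ne1\big)$; by elementarity of $i$ there is $\vec\nu\in\Gamma^c$ with $q_l(\vec\nu)=1$, $p_j(\vec\nu)=g_j$, $u(\vec\nu)\ne1$. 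Since the $q_l$ then vanish on $i(\vec\nu)$, the map $\lambda_i\mapsto i(\nu_i)$ defines a homomorphism $\pi\colon\Lambda\to i(\Gamma)$; it restricts to the identity on $i(\Gamma)$ (as $\pi(i(g_j))=i(p_j(\vec\nu))=i(g_j)$) and $\pi(t)=i(u(\vec\nu))\ne1$. Hence $\Delta_t:=\ker\pi$ satisfies $\Lambda=\Delta_t\rtimes i(\Gamma)$ and $t\notin\Delta_t$. For the last assertion put $A=Z(C_\Lambda(i(b)))$; for $a\in A$, applying $\pi$ to $a\,i(b)=i(b)\,a$ and to $ac=ca$ for $c\in C_{i(\Gamma)}(i(b))$ shows $\pi(a)\in Z(C_{i(\Gamma)}(i(b)))=i(Z(C_\Gamma(b)))\subseteq A$ by~\eqref{cond:prime.gp.2}, whence $a=\big(a\pi(a)^{-1}\big)\pi(a)$ with $a\pi(a)^{-1}\in A\cap\Delta_t$ and $\pi(a)\in A\cap i(\Gamma)=i(Z(C_\Gamma(b)))$; as $A$ is abelian and $\Delta_t\cap i(\Gamma)=\{1\}$, this is the claimed direct sum.

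The routine steps are~\eqref{cond:prime.gp.1} and~\eqref{cond:prime.gp.2}. The delicate idea in~\eqref{cond:prime.gp.4} is that, although no single formula expresses ``$x$ lies in some finite normal subgroup'', the family $\{\theta_\ell\}$ together with the \emph{maximality} of $N$ — which uniformly bounds the orders of finite normal subgroups of $\Gamma$ — does the job. I expect the main obstacle to be in~\eqref{cond:prime.gp.3}: one cannot quantify over ``all relations of $\Lambda$'', so before invoking elementarity one must reduce the relevant relations to a finite set, which is automatic when $\Lambda$ is finitely presented and is exactly what equational Noetherianity provides when $\Gamma$ is linear. Once that reduction is in place, constructing $\Delta_t$ and the centralizer computation are bookkeeping.
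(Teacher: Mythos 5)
Your proof is correct and follows essentially the same route as the paper's: definability of the center and of $Z(C_\Gamma(b))$ for (1)--(2), the bounded-conjugate-product encoding of ``lies in a finite normal subgroup'' from Remark \ref{rem:BP} for (3), and a definable retraction $\Lambda\to i(\Gamma)$ obtained from finite presentability or Hilbert's basis theorem for (4). The only (harmless) variation is that in part (3) you use single-element formulas $\theta_\ell$ together with the scheme $\forall x(\theta_\ell(x)\to\theta_k(x))$, where the paper works with $k$-tuples of elements; your version even streamlines the maximality argument slightly.
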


\begin{proof} Identify $\Gamma$ with its image in $\Lambda$. Let $\vec{g}$ be a generating $n$-tuple of $\Gamma$.  
Let $Z(\Gamma)=\left\{a_1,\ldots,a_m\right\}$. Let $\xi(x_1,\ldots,x_m)$ be the formula saying that every central element is one of the $x_i$s. Since $\xi(a_1,\ldots,a_m)$ holds in $\Gamma$, it holds in $\Lambda$. This implies \eqref{cond:prime.gp.1}.

For every word $w(\vec{x})$, let $\nu_w(\vec{x},y)$ be the first order formula saying that $w(\vec{x})$ is in the center of the centralizer of $y$. If $\nu_w(\vec{g},b)$ holds in $\Gamma$, then it holds in $\Delta$. This implies \eqref{cond:prime.gp.2}.

 Note that the elements $h_1,\ldots,h_k $ belong to a finite normal subgroup of size at most $C$ if and only if 
$[S_{h_1,\ldots,h_k}]^C \le C$ where $S_{h_1,\ldots,h_k}$ is the union of the  conjugacy classes of $h_1,\ldots,h_k,h_1^{-1},\ldots,h_k^{-1}$ and $[S]^C$ is as in Notation  \ref{not:setpower}.  
Let $D$ be the size of $N$. For every $k\ge 1$ and $C \ge D$ let $\psi_{k,C}$ be a first order sentence which states that for every elements $h_1,\ldots,h_k$ if
$[S_{h_1,\ldots,h_k}]^C \le C$ then $[S_{h_1,\ldots,h_k}]^D \le D$. Then $\psi_{k,C}$ holds in $\Gamma$ and therefore it also holds in $\Lambda$. 
It follows that a finite normal subgroup of $\Lambda$ is of size at most $D$. In a similar manner to the proof of part  \eqref{cond:prime.gp.1} we see that since $i$ is an elementary embedding 
then $i(N)$ is a finite normal subgroup of $\Lambda$ of size $D$. If $M$ is any other finite normal subgroup of $\Lambda$ then $i(N)M$ is also a finite normal subgroup of $\Lambda$. 
Since $|i(N)M|\le D$ and $|i(N)|=D$ then $M \le i(N)$. This implies \eqref{cond:prime.gp.4}.

Finally we prove \eqref{cond:prime.gp.3}. In order to prove the first part of \eqref{cond:prime.gp.3}  it is enough to find an epimorphism $\varphi_t:\Lambda \rightarrow \Gamma$ whose restriction to $\Gamma$ is the identity map and such that $\varphi_t(t)\neq 1$ (since, in this case, $\Lambda=\ker(\varphi_t) \mathbb{o} \Gamma$).  
Let $\la y_1,\ldots,y_m \mid r_1,\ldots \ra$ be a presentation for $\Lambda$, and let $\vec{h}:=(h_1,\ldots,h_m)$ be a generating $m$-tuple  of $\Lambda$ corresponding to this presentation.
If $\Lambda$ is finitely presented then there are only finitely many relations $r_1,\ldots,r_s$ and for every $m$ matrices $l_1,\ldots,l_m \in \GL_d(F)$ which satisfy these relations,
 the map $h_i \mapsto l_i$ extends to a homomorphism from $\Lambda$ to $\Gamma$ . If $\Gamma$ is linear,
Hilbert's basis theorem implies that there exists a number $s$ such that any $m$ matrices $l_1,\ldots,l_m \in \GL_d(F)$ satisfying the relations $r_1,\ldots,r_s$ also satisfy the rest of the relations $r_i$, and, in particular, the map $h_i \mapsto l_i$ extends to a homomorphism. 
Let $w_1(\vec{x}),\ldots,w_n(\vec{x})$ be words such that $g_i=w_i(\vec{h})$ for every $1 \le i \le n$, and let $u(\vec{x})$ be a word such that $t=u(\vec{h})$. Let $\eta(y_1,\ldots,y_m,x_1,\ldots,x_n)$ be the first order formula which is the conjunction of
\begin{enumerate}
\item $\vec{y}$ satisfies $r_1,\ldots,r_s$.
\item $\bigwedge_{1 \le i \le n}w_i(\vec{y})=x_i$.
\item $u(\vec{y})\neq 1$.
\end{enumerate}
The tuple $\vec{h}$ is a testment that the formula $(\exists \vec{y}) \eta(\vec{y},\vec{g})$ holds in $\Lambda$. Hence, this formula also holds in $\Gamma$. Let $\vec{k}\in \Gamma^m$ be such that $\eta(\vec{k},\vec{g})$ holds. By the first part of $\eta$, the map $h_i \mapsto k_i$ extends to a homomorphism $\varphi_t: \Lambda \rightarrow \Gamma$. By the second part of the definition of $\eta$, $\varphi_t(g_i)=\varphi_t(w_i(\vec{h}))=w_i(\vec{k})=g_i$ for every $1 \le i \le n$, so the restriction of $\varphi_t$ to $\Gamma$ is the identity map. By the third part of the definition of $\eta$, $\varphi_t(t)=\varphi_t(u(h))=u(\vec{k})\neq 1$. The second part of \eqref{cond:prime.gp.3} follows from the first part and \eqref{cond:prime.gp.2}.

\end{proof}

\begin{remark} In this paper, the requirement that $t \not \in \Delta_t$ in  part \eqref{cond:prime.gp.3} of Proposition \ref{prop:prime.gp} does not play any role. 
This requirement becomes important when dealing with the positive characteristic case and it is included here for future reference. 
\end{remark}

\begin{theorem} \label{thm:f.o.r.criterion} Let $F$ be a field and let $\Gamma \le \GL_d(F)$ be a finitely generated prime group
with a Brenner element $b$.
Assume that:
\begin{enumerate}
\item $\Gamma$ has a finite center. 
\item $\Gamma$ has a maximal  finite normal subgroup N (i.e., any finite normal subgroup is contained in N). 
\item There exists  $k$ such that the minimal number of generators of every finitely generated 
subgroup of $Z(C_\Gamma(b))$ is at most $k$.
\end{enumerate} 
Then $\Gamma$ is first order rigid. 
\end{theorem}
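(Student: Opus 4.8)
The plan is to show that an arbitrary finitely generated group $\Lambda$ with $\Lambda\equiv\Gamma$ is isomorphic to $\Gamma$, the isomorphism being the embedding supplied by primeness. First I would note that, by Proposition~\ref{prop:res.finite} and the linearity of $\Gamma$, the group $\Lambda$ is linear and, being finitely generated, residually finite; and since $\Gamma$ is prime there is an elementary embedding $i\colon\Gamma\hookrightarrow\Lambda$, which I henceforth treat as an inclusion $\Gamma\le\Lambda$ (so $i(b)=b$). Since $\Gamma$ is linear with finite center and a maximal finite normal subgroup $N$, Proposition~\ref{prop:prime.gp} applies and yields: for each $1\ne t\in\Lambda$ a normal subgroup $\Delta_t\nom\Lambda$ with $\Lambda=\Delta_t\rtimes\Gamma$ and $t\notin\Delta_t$; that every nontrivial finite normal subgroup of $\Lambda$ lies in $\Gamma$; that $Z(C_\Gamma(b))=Z(C_\Lambda(b))\cap\Gamma$; and that $Z(C_\Lambda(b))=\bigl(Z(C_\Lambda(b))\cap\Delta_t\bigr)\oplus Z(C_\Gamma(b))$. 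Everything is then reduced to proving $\Lambda=\Gamma$.

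Next I would transfer the Brenner property along $i$. For each fixed $D\ge 1$, the assertion that $b$ has the Brenner property with constant $D$ is a first order sentence with parameter $b$: membership of an element in $[h^\Gamma\cup(h^{-1})^\Gamma]^D$ is expressed by a bounded block of quantifiers; the condition $|[h^\Gamma\cup(h^{-1})^\Gamma]^D|>D$ by asserting the existence of $D+1$ distinct such elements; and the condition $[h^\Gamma\cup(h^{-1})^\Gamma]^D\cap Z(C_\Gamma(b))\ne\{1\}$ by one further existential quantifier together with the defining formula of $Z(C_\Gamma(b))$. Hence $b$ has the Brenner property in $\Lambda$ with the same constant $D_0$.

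Now for the crux. Assume for contradiction that $\Lambda\ne\Gamma$ and pick $s\in\Lambda\setminus\Gamma$; then $\Delta:=\Delta_s\ne\{1\}$. Any $1\ne d\in\Delta$ has infinite normal closure in $\Lambda$, for otherwise that normal closure would be a nontrivial finite normal subgroup of $\Lambda$, hence contained in $\Gamma$, forcing $d\in\Gamma\cap\Delta=\{1\}$. By the last observation in Remark~\ref{rem:BP} (applied in $\Lambda$) this gives $|[S_d]^{D}|>D$ for all $D$, where $S_d=d^\Lambda\cup(d^{-1})^\Lambda$; applying the Brenner property of $b$ with constant $D_0$, and using $[S_d]^{D_0}\subseteq\Delta$ (as $\Delta$ is normal and contains $d$), we obtain $C:=Z(C_\Lambda(b))\cap\Delta\ne\{1\}$. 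Put $A=Z(C_\Gamma(b))$, so $Z(C_\Lambda(b))=C\oplus A$. Hypothesis~(3) forces $\dim_{\F_p}(A/pA)\le k$ for every prime $p$ — if $a_0,\dots,a_k\in A$ were $\F_p$-independent modulo $pA$, the subgroup they generate would require at least $k+1$ generators — so for each $p$ this dimension is a definite finite number, and ``$\dim_{\F_p}(A/pA)=m$'' is, for finite $m$, a first order assertion relativized to the definable abelian subgroup $Z(C_\Gamma(b))$; it therefore holds, with the same $m$, for $Z(C_\Lambda(b))$. Comparing with $Z(C_\Lambda(b))=C\oplus A$ after tensoring with $\F_p$ gives $\dim_{\F_p}(C/pC)=0$, i.e. $C=pC$, for every prime $p$; hence $C$ is divisible. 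But $C\ne\{1\}$ is then a nontrivial divisible abelian subgroup of the residually finite group $\Lambda$, which is impossible (a nontrivial divisible abelian group has no proper finite index subgroup, so is not residually finite, and subgroups of residually finite groups are residually finite). Therefore $\Lambda=\Gamma$, $i$ is an isomorphism, and $\Gamma$ is first order rigid.

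The main obstacle is the final paragraph, and in particular the correct use of hypothesis~(3): the literal statement of~(3) is not preserved under elementary equivalence, so it cannot simply be transported to $Z(C_\Lambda(b))$. What must be isolated are the first order consequences of~(3) that do transfer — the finiteness and exact value of each $\dim_{\F_p}(Z(C_\Gamma(b))/p\,Z(C_\Gamma(b)))$ — and these are then played off against the direct-sum decomposition from Proposition~\ref{prop:prime.gp} and the residual finiteness of $\Lambda$ to rule out a nontrivial divisible summand in $Z(C_\Lambda(b))\cap\Delta$. The steps preceding it are routine: linearity and residual finiteness of $\Lambda$ come from Proposition~\ref{prop:res.finite}, the decompositions from Proposition~\ref{prop:prime.gp}, and the first-order expressibility of the Brenner property (for each fixed constant) is immediate.
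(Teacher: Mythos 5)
Your proof is correct and follows essentially the same strategy as the paper's: obtain the elementary embedding $i:\Gamma\hookrightarrow\Lambda$ from primeness, invoke Proposition~\ref{prop:prime.gp} to split $\Lambda=\Delta\rtimes\Gamma$ and decompose $Z(C_\Lambda(b))$ accordingly, transfer the Brenner property (for a fixed constant it is first order) to force $Z(C_\Lambda(b))\cap\Delta\neq 1$ whenever $\Delta\neq 1$, and then use the first-order consequences of hypothesis~(3) together with the direct-sum decomposition to show $Z(C_\Lambda(b))\cap\Delta$ is divisible, contradicting residual finiteness. The only cosmetic divergence is that you work with $\dim_{\mathbb{F}_p}$ over primes $p$, whereas the paper works with $|Z(C_\Gamma(b))/P_m|$ for all $m$ -- both yield divisibility -- and you spell out explicitly the transfer of the Brenner property along $i$, which the paper uses implicitly.
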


\begin{proof} Let $\Lambda$ be a finitely generated group that is elementarily equivalent to $\Gamma$. Since $\Gamma$ is prime, there is an elementary embedding $i:\Gamma \rightarrow \Lambda$. As before, we identify $\Gamma$ with $i(\Gamma)$. By Proposition \ref{prop:prime.gp}\eqref{cond:prime.gp.3} (with any non-trivial $t$), there is a subgroup $\Delta \nom \Lambda$ such that $\Lambda=\Delta \mathbb{o} \Gamma$. We will show that $\Delta=1$. 

If there is a non-trivial element in $\Delta$, the Brenner property of $b$, Remark \ref{rem:BP}, part \eqref{cond:prime.gp.4} of Proposition \ref{prop:prime.gp} and the normality of $\Delta$ imply that there is a non-trivial element in $\Delta\cap Z(C_{\Lambda}(b))$. Hence, it is enough to prove that $\Delta\cap Z(C_{\Lambda}(b))=\left\{1\right\}$.

For an abelian group $\Phi$, denote the set of $m$-powers in $\Phi$ by $P_m(\Phi)$, and note that this is a subgroup. Since every finitely generated subgroup of $Z(C_{\Gamma}(b))$ is generated
by at most $k$ elements, the group $Z(C_{\Gamma}(b)) / P_m(Z(C_{\Gamma}(b)))$ is finite. Let $d_m$ be its size. There is a first order formula $\nu_m(x)$ that says that the quotient of the center of the centralizer of $x$ by the collection of $m$-th powers of the center of the centralizer of $x$ has size $d_m$. Since $\nu_m(b)$ holds in $\Gamma$, it also holds in $\Lambda$. Hence, $|Z(C_{\Lambda}(b)) / P_m(Z(C_{\Lambda}(b)))|=|Z(C_{\Gamma}(b)) / P_m(Z(C_{\Gamma}(b)))|$. Proposition \ref{prop:prime.gp}\eqref{cond:prime.gp.3} implies that, for every $m$, $Z(C_\Lambda(b))\cap \Delta = P_m(Z(C_\Lambda(b))\cap \Delta)$. 
Hence, $Z(C_\Lambda(b))\cap \Delta$ is divisible. 
By Proposition \ref{prop:res.finite}, $\Lambda$ is linear. Since $\Lambda$ is finitely generated, it is residually finite. It follows that $Z(C_\Lambda(b))\cap \Delta$ is a divisible and residually finite group, a contradiction.
\end{proof} 

Combining Theorem \ref{thm:f.o.r.criterion}, Lemma \ref{lem:Brenner}, Example \ref{expl:SLn}, and noting that $Z(C_{\SL_n(\mathbb{Z})}(e_{1,n}))$ is the cyclic group generated by $e_{1,n}$, we get
\begin{corollary} If $n \geq 3$, $\SL_n(\mathbb{Z})$ is first order rigid.
\end{corollary}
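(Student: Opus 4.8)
The plan is to verify the three hypotheses of Theorem~\ref{thm:f.o.r.criterion} for $\Gamma = \SL_n(\mathbb{Z})$ with $n \geq 3$, and then to simply invoke that theorem together with the primeness established in Example~\ref{expl:SLn} and the Brenner property established in Lemma~\ref{lem:Brenner}. So the corollary is really a bookkeeping step, and the only real content is checking that the structural hypotheses (1)--(3) of the theorem hold. I would organize the proof around these three checks.

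First, $\Gamma = \SL_n(\mathbb{Z})$ is a finitely generated (indeed finitely presented) linear group, so it sits inside $\GL_n(\mathbb{Z}) \subset \GL_n(\mathbb{Q})$ as required, and it is prime by Example~\ref{expl:SLn}. The element $b = e_{1,n}$ is a Brenner element by Lemma~\ref{lem:Brenner}. Hypothesis (1): $Z(\SL_n(\mathbb{Z}))$ consists of the scalar matrices $\zeta I_n$ with $\zeta^n = 1$ and $\zeta \in \mathbb{Z}$, hence equals $\{\pm I_n\}$ (or $\{I_n\}$ when $n$ is odd), which is finite. Hypothesis (2): one must exhibit a maximal finite normal subgroup. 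Here I would use the fact that any finite normal subgroup of $\SL_n(\mathbb{Z})$ is central --- this follows from the normal subgroup structure of $\SL_n(\mathbb{Z})$ for $n\geq 3$ (a consequence of the congruence subgroup property / Mennicke--Bass--Milnor--Serre, or more directly: a nontrivial normal subgroup of $\SL_n(\mathbb{Z})$, $n \geq 3$, has finite index, hence cannot be finite unless $\SL_n(\mathbb{Z})$ itself were finite). Thus $N = Z(\Gamma)$ serves as the maximal finite normal subgroup.

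The key computational point is Hypothesis (3), and the parenthetical remark in the statement of the corollary already tells us what to check: $Z(C_{\SL_n(\mathbb{Z})}(e_{1,n}))$ is the infinite cyclic group generated by $e_{1,n}$. Granting this, every finitely generated subgroup of $Z(C_\Gamma(b)) \cong \mathbb{Z}$ is cyclic, so the bound on the number of generators holds with $k = 1$. To verify the parenthetical claim one computes the centralizer: a matrix commutes with $e_{1,n} = I + E_{1,n}$ iff it commutes with the rank-one nilpotent $E_{1,n}$; writing this out over $\mathbb{Q}$ shows $C_{\GL_n(\mathbb{Q})}(E_{1,n})$ consists of matrices preserving $\ker E_{1,n} = \{x : x_n = 0\}$ and $\mathrm{im}\, E_{1,n} = \mathbb{Q}e_1$ appropriately, and one checks that the center of the integral centralizer reduces to $\{e_{1,n}^k : k \in \mathbb{Z}\}$ --- the point being that any element central in $C_\Gamma(e_{1,n})$ must in particular commute with the many unipotents $e_{i,j}$ ($2 \leq i,j \leq n-1$, say, and appropriate others) that also centralize $e_{1,n}$, forcing it to be upper unitriangular with all off-diagonal entries zero except in position $(1,n)$.

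The main obstacle, such as it is, is Hypothesis (2): one needs the (standard but nontrivial) input that $\SL_n(\mathbb{Z})$ for $n \geq 3$ has no nontrivial finite normal subgroup beyond the center. Everything else is a short direct computation. Once (1)--(3) are in hand, the proof is one line: apply Theorem~\ref{thm:f.o.r.criterion} with $F = \mathbb{Q}$, $d = n$, $b = e_{1,n}$, $N = Z(\Gamma)$, and $k = 1$.
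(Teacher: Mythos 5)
Your proposal is correct and takes essentially the same route as the paper: it cites Theorem~\ref{thm:f.o.r.criterion}, Lemma~\ref{lem:Brenner}, and Example~\ref{expl:SLn}, and verifies hypothesis~(3) via the computation that $Z(C_{\SL_n(\mathbb{Z})}(e_{1,n}))$ is essentially the cyclic group $\langle e_{1,n}\rangle$, which is exactly the paper's one-line argument with the hypotheses spelled out. (One small point, shared with the paper's own parenthetical: for even $n$ the element $-I_n$ also lies in $Z(C_\Gamma(e_{1,n}))$, so that group is $\mathbb{Z}\times\mathbb{Z}/2$ rather than $\mathbb{Z}$; this is harmless since hypothesis~(3) only needs a uniform bound on the number of generators.)
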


\section{Superrigid lattices are prime} \label{sec:prime}

In this section, we prove that superrigid lattices are prime. Recall our notation that $G,H,\ldots$ denote algebraic groups and $\mathbf{G},\mathbf{H},\ldots$ denote locally compact groups.

\begin{definition} \label{defn:superrigid} A subgroup  $\Gamma$ of a locally compact group $\mathbf{G}$ is called \emph{superrigid} if, for any simple adjoint algebraic group $H$ defined over a local field $L$, any homomorphism from $\Gamma$ to $H(L)$, whose image is unbounded and Zariski dense, extends to a homomorphism from $\mathbf{G}$ to $H(L)$.
\end{definition} 

There are many examples of superrigid subgroups:

\begin{example} \,\ 
\begin{enumerate} 
\item By Theorem (2) in page 2 of \cite{Mar}, irreducible lattices in higher-rank semisimple groups are superrigid.
\item By \cite{Cor} and \cite{GS}, lattices in $\Sp(n,1)$ and in $F_4^{(-20)}$ are superrigid.
\item In \cite{BL} there were given examples of groups which are superrigid but not lattices. 
\end{enumerate} 
\end{example} 

Recall that a semisimple group is a locally compact group $\mathbf{G}=\prod_{i=1}^r G_i(F_i)$, where $F_i$ are local fields of characteristic zero  and $G_i$ are connected simple algebraic groups defined over $F_i$ and $G_i(F_i)$ is non-compact for every $1 \le i \le r$. 
The purpose of this section is to prove the following:

\begin{theorem} \label{thm:sr.prime} Let $\Phi$ be a group which is abstractly commensurable  to an irreducible lattice in higher-rank (characeterstic zero) semisimple group. 
Then $\Phi$ is prime.
\end{theorem}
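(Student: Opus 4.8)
The plan is to invoke the criterion of Oger and Sabbagh (Theorem~\ref{thm:prime.criterion}): since $\Phi$ is finitely generated, it suffices to exhibit a generating tuple $\vec g \in \Phi^n$ together with a first-order formula $\phi(\vec x)$ that defines precisely the $\Aut(\Phi)$-orbit of $\vec g$. The strategy mimics Example~\ref{expl:SLn}, but the key point there was the consequence of superrigidity that every endomorphism of $\SL_n(\mathbb Z)$ is trivial or an automorphism. So the heart of the argument is to upgrade superrigidity of $\Phi$ (which holds because $\Phi$ is abstractly commensurable to an irreducible higher-rank lattice, by Margulis) to a statement of the form: \emph{every endomorphism of $\Phi$ with non-trivial (or infinite, or Zariski-dense) image is an automorphism}, or at least a statement controlling such endomorphisms enough to write down $\phi$.

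First I would fix a finite presentation $\langle g_1,\dots,g_a \mid r_1,\dots,r_b\rangle$ of $\Phi$ and consider the formula asserting that $\vec x$ satisfies all the relators $r_j$ together with enough extra conditions (e.g.\ that the images of certain specified elements are non-trivial, or that certain commutation/non-commutation relations hold) to force any homomorphism $g_i \mapsto h_i$ to have unbounded Zariski-dense image in the ambient algebraic group. Then superrigidity extends such a homomorphism to the ambient semisimple group $\mathbf{G}$ (after passing to the adjoint form and using that $\Phi$, being commensurable to the lattice, contains a finite-index subgroup to which Margulis applies); one argues that the extension is an automorphism of $\mathbf{G}$ carrying $\Phi$ into a commensurable lattice, and then that it must in fact be an automorphism of $\Phi$ itself. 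The subtlety of commensurability is handled by passing to a common finite-index subgroup $\Phi_0$ which is genuinely an arithmetic lattice, proving the statement there, and pushing it back up: $\Phi$ has finitely many subgroups of each index, normal cores are definable, and $\Aut(\Phi_0)$-orbits can be related to $\Aut(\Phi)$-orbits by a finite amount of bookkeeping that can be absorbed into the formula $\phi$.

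The main obstacle I anticipate is twofold. First, superrigidity as stated produces an extension to $\mathbf{G}$ only when the image is unbounded and Zariski-dense, so the formula $\phi$ must encode enough to rule out homomorphisms whose image lands in a proper subgroup, in a bounded subgroup, or in a quotient — this requires exploiting properties such as the finiteness of the abelianization (property (T) gives $\Phi^{ab}$ finite), the bounded generation or the simplicity/quasi-simplicity of the relevant finite-index subgroups, and the congruence subgroup property, to pin down the image. Second, once an endomorphism $f\colon \Phi \to \Phi$ with Zariski-dense unbounded image is shown to extend to an algebraic automorphism of $\mathbf G$, one must check $f$ is surjective onto $\Phi$ and not merely an injection into a commensurable lattice; this uses that $\Phi$ is \emph{the} lattice, arguments about covolumes or about $\Phi$ being its own commensurator being finite-index in the commensurator, together with the fact that distinct arithmetic lattices in the commensurability class are not isomorphic via an inner-like map unless equal. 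Packaging all of this into a single first-order formula $\phi(\vec x)$ — so that $\phi(\vec h)$ holds iff $\vec h$ lies in the $\Aut(\Phi)$-orbit of the fixed generating tuple — is where the real work lies; the logical half (applying Theorem~\ref{thm:prime.criterion}) is then immediate.
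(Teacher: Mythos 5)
Your high-level plan coincides with the paper's: apply the Oger--Sabbagh criterion (Theorem~\ref{thm:prime.criterion}) and reduce to showing that every endomorphism of $\Phi$ determined by a tuple satisfying the formula is an automorphism, with superrigidity as the engine. But two of the steps you leave as ``the real work'' are exactly where your plan would break. First, you want the formula $\phi$ to force the induced endomorphism to have \emph{unbounded Zariski-dense} image, invoking property (T), bounded generation, CSP, etc.; none of these is expressible as a first-order condition on the image tuple, and ``images of certain specified elements are non-trivial'' or ``certain non-commutation relations hold'' does not by itself rule out a finite image. The paper's solution is to force only an \emph{infinite} image, via a uniform Jordan constant $N$ (every finite subgroup of $\Phi$ has a normal abelian subgroup of index at most $N$) together with the Tits alternative: choosing words $u_1,u_2$ with $[u_1(\vec g)^N,u_2(\vec g)^N]\neq 1$, the single clause $[u_1(\vec x)^N,u_2(\vec x)^N]\neq 1$ guarantees the image is infinite. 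That infinite image is then automatically Zariski-dense and unbounded is \emph{proved} (Steps 2--3 of Theorem~\ref{thm:rigidity}), not encoded in the formula. Relatedly, you address surjectivity (correctly, via covolumes) but never injectivity: an endomorphism with infinite image can still have a non-trivial finite kernel. The paper kills this by showing the kernel must lie in the maximal finite normal subgroup and adding to $\phi$ the finitely many clauses $w_i(\vec x)\neq 1$ for its non-trivial elements.

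Second, your commensurability bookkeeping --- prove the statement for a finite-index arithmetic subgroup $\Phi_0$ and ``push it back up'' by relating $\Aut(\Phi_0)$-orbits to $\Aut(\Phi)$-orbits --- is not a safe route. The defining formula for $\Phi_0$ quantifies over tuples of $\Phi_0$, automorphisms of $\Phi_0$ need not extend to $\Phi$, and neither primeness nor first-order rigidity is known to pass between commensurable groups (Section~\ref{sec:commensurable} of the paper exhibits a failure of the latter). The paper instead works with $\Phi$ throughout: it fixes a presentation of $\Phi$ itself and proves the endomorphism rigidity statement (Corollary~\ref{cor:com.rigid}: infinite image implies finite kernel; injective implies automorphism) directly for $\Phi$, by restricting a given endomorphism of $\Phi$ to a finite-index arithmetic subgroup where Theorem~\ref{thm:rigidity} applies, and then using index/covolume comparisons to conclude surjectivity on all of $\Phi$. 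You have the right ingredients, but the reduction to the finite-index subgroup must happen inside the proof of the endomorphism rigidity, not at the level of the defining formula.
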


Some preparation is needed for the proof of Theorem \ref{thm:sr.prime} which is given below. 

\begin{definition} Let $f:\mathbf{G} \rightarrow \mathbf{H}$ be a homomorphism between two locally compact groups. We say that $f$ is \emph{locally measure preserving} if there is a neighborhood $1\in U \subset \mathbf{G}$ such that $f|_U:U \rightarrow f(U)$ is a measure preserving homeomorphism. 
\end{definition}

Note that, if $f: \mathbf{G}\rightarrow \mathbf{H}$ is locally measure preserving and the restriction of $f$ to $\Omega \subset \mathbf{G}$ is one-to-one, then $f|_\Omega$ is measure preserving.

\begin{example} If $G,H$ are semisimple algebraic groups defined  over a local field $K$ and $f:G \rightarrow H$ is a central isogeny (i.e., $f$ is surjective and $\ker(f)$ is a finite subgroup of the center of $G$) with invertible derivative, then, up to normalization of the Haar measures by constants, the map $f:G(K) \rightarrow H(K)$ is locally measure preserving.
\end{example} 

Suppose that $f: \mathbf{G} \rightarrow \mathbf{H}$ is locally measure preserving and onto, and let $\Lambda \subset \mathbf{G}$ be a discrete subgroup such that $\Lambda \supset \kernel(f)$. If $\Omega \subset \mathbf{G}$ is a fundamental domain for $\Lambda$ in $\mathbf{G}$, then $f(\Omega)$ is a fundamental domain for $f(\Lambda)$ and $f|_\Omega : \Omega \rightarrow f(\Omega)$ is one-to-one. It follows that the covolume of $\Lambda$ in $\mathbf{G}$ is equal to the covolume of $f(\Lambda)$ in $\mathbf{H}$.

We will use a theorem of Borel and Tits. In the following statement, if $G$ is an algebraic group over a field $F$, we denote  by $G^+$ the subgroup of $G(F)$ generated by the subgroups $U(F)$, where $U$ ranges over the unipotent radicals of parabolic subgroups of $G$. If $\beta : F \rightarrow F'$ is a homomorphism of fields, we denote the base change of $G$ by $\beta$ by ${}^\beta G$.

\begin{theorem}[\cite{BT-abstract}, Theorem A] Let $F,F'$ be fields. Let $G$ and $G'$ be absolutely simple connected algebraic groups over $F$ and $F'$ respectively. Assume that $G'$ is adjoint and $G^+$ is Zariski dense in $G$. Let $f:G(F) \rightarrow G'(F')$ be a homomorphism with Zariski-dense image. Then there is a field homomorphism $\beta : F \rightarrow F'$, an $F'$-isogeny with invertible derivative $\phi:{}^{\beta}G \rightarrow G'$, and a homomorphism $\gamma:G(F) \rightarrow Z(G'(F'))$ such that $f(g)=\gamma(g) \phi(\beta(g))$.
\end{theorem}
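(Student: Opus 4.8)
The plan is to follow the proof of Borel and Tits \cite{BT-abstract}; since this is a deep theorem I will only lay out its architecture. First I would dispose of the free reductions. The Zariski-density hypothesis already forces $F$ and $F'$ to be infinite (over a finite field $G(F)$ or $G'(F')$ is finite, and a finite group cannot be Zariski dense in a positive-dimensional simple group), and then $G'(F')$ is a Zariski-dense subgroup of the adjoint group $G'$, so $Z(G'(F'))\subseteq Z(G')=\{1\}$; hence $\gamma$ is automatically trivial and it suffices to produce $\beta$ and $\phi$ with $f=\phi\circ\beta$. Also, since $G^+$ is Zariski dense, $G$ must be $F$-isotropic (an $F$-anisotropic group has no proper $F$-parabolic, so $G^+=\{1\}$). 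I would then fix a maximal $F$-split torus $S\subseteq G$, let $\Phi=\Phi(S,G)$ be the relative root system, and for $a\in\Phi$ let $U_a$ be the corresponding root subgroup, a connected unipotent $F$-group normalized by $Z_G(S)$; two facts will be used repeatedly: $G^+$ is generated by the groups $U_a(F)$ together with the rank-one subgroups $L_a:=\langle U_a(F),U_{-a}(F)\rangle$, and $G(F)/G^+$ is abelian.

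\textbf{Step 1: $f$ is algebraic, up to a field homomorphism, on each root subgroup.} This is the real content. Fix $a\in\Phi$; modulo its center, $L_a$ is an absolutely simple $F$-isotropic group of $F$-rank one, controlled by a Bruhat decomposition and by the conjugation action of its one-dimensional split torus on $U_{\pm a}$ --- the torus scales $U_a(F)$ by a ``large'' subgroup of $F^\times$, and every torus element is a bounded-length product of elements of $U_a(F)\cup U_{-a}(F)$. Using this, I would first show that $f$ sends unipotent elements to unipotent elements: for $u\in U_a(F)\setminus\{1\}$ the map $t\mapsto f(u^t)$ is a homomorphism out of $(F,+)$ with abelian image, and the relations above force the eigenvalue characters of its semisimple part to be invariant under multiplication by a large subgroup of $F^\times$, hence trivial; so $f(u)$ is unipotent and $M_a:=\overline{f(U_a(F))}$ is a connected unipotent $F'$-group. (This is exactly where the hypotheses that $G'$ is adjoint and that $f$ has Zariski-dense image are used, and where characteristic-$p$ phenomena first enter.) Next I would analyze $f|_{U_a(F)}\colon U_a(F)\to M_a(F')$: it is a homomorphism of unipotent groups intertwining the scaling action of $Z_G(S)(F)$ on $U_a(F)$ with a linear action on $M_a(F')$, and the classical rigidity lemma --- an additive map between vector groups compatible with multiplication by a ``large'' subgroup of the scalars is built from field homomorphisms, Frobenius twists being the only characteristic-$p$ subtlety --- produces a field homomorphism $\beta_a\colon F\to F'$ and an $F'$-morphism $\varphi_a\colon {}^{\beta_a}U_a\to M_a\subseteq G'$ with $f|_{U_a(F)}=\varphi_a\circ\beta_a$. (For non-reduced roots or non-split forms $U_a$ is a two-step or twisted unipotent group, and one runs the analogous, more elaborate, argument.)

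\textbf{Step 2: globalization.} I would first check that the $\beta_a$ all coincide with a single $\beta$: whenever $a,b,a+b\in\Phi$, the Chevalley commutator relations bind $U_a$, $U_b$, $U_{a+b}$ together, and compatibility of $f$ with them forces $\beta_a=\beta_b$; connectedness of the relative Dynkin diagram spreads this to all of $\Phi$ (in the $F$-rank-one case one uses the torus relations inside $L_a$). Now $G^+$ has a presentation of Steinberg / Curtis--Tits type --- generators the $U_a(F)$, relations the internal group laws, the commutator relations, and the rank-one relations --- and since Step 1 shows $f$ respects all of these, the family $(\varphi_a)$ assembles into an $F'$-morphism $\phi\colon {}^\beta G\to G'$ with $f|_{G^+}=\phi\circ\beta$ (for $F$-rank one, $G^+=L_a$ and one builds $\phi$ by hand). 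Zariski density makes $\phi$ surjective, hence an isogeny onto $G'$, with invertible derivative --- automatic in characteristic $0$, and in characteristic $p$ because the inseparable part of $f$ has already been absorbed into $\beta$. Finally, for $g\in G(F)$ the element $f(g)\,\phi(\beta(g))^{-1}$ centralizes the Zariski-dense normal subgroup $f(G^+)$ of the simple group $G'$, hence is trivial, so $f=\phi\circ\beta$ on all of $G(F)$, as required.

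\textbf{Where the difficulty lies.} The reductions and the globalization of Step 2 are comparatively formal; the genuine obstacle is Step 1 --- that an abstract homomorphism is automatically algebraic, modulo a field embedding, on the unipotent radicals, and in particular maps unipotent elements to unipotent elements. The delicate bookkeeping of Frobenius twists and inseparable isogenies in positive characteristic --- which is precisely why the conclusion features an arbitrary field homomorphism $\beta$ rather than an inclusion --- also lives entirely within Step 1.
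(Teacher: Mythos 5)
This statement is not proved in the paper: it is quoted (with attribution) from Borel--Tits \cite{BT-abstract}, Th\'eor\`eme A, and used as a black box; the only in-paper commentary is the remark following it, which observes -- as you do in your opening reduction -- that in the cases of interest $G^+$ is automatically Zariski dense and $Z(G'(F'))=Z(G')=1$, so the correction term $\gamma$ disappears. There is therefore no in-paper proof to compare yours against, and the right benchmark is the original Borel--Tits argument.

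Measured against that, your outline is a faithful description of the architecture: the preliminary reductions (infinitude of the fields, triviality of $\gamma$ via Zariski density of $G'(F')$ in the adjoint group $G'$, isotropy of $G$), the two pillars of the hard step (images of unipotents are unipotent; an additive map of vector groups equivariant under a large subgroup of the scaling torus is assembled from field homomorphisms), the propagation of a single $\beta$ across the root system via the commutator relations, the extension of the root-group data to an isogeny ${}^\beta G\to G'$, and the final observation that $f(g)\phi(\beta(g))^{-1}$ centralizes the Zariski-dense image of $G^+$. But you should be explicit that this is an outline and not a proof: the entire mathematical content of the theorem sits inside your Step 1, whose two key assertions are stated rather than established (the unipotence claim in particular is one of the most delicate points of \cite{BT-abstract}, and your one-line argument about eigenvalue characters does not yet engage with it). One smaller point: Borel--Tits do not literally globalize through a Steinberg/Curtis--Tits presentation of $G^+$; they extend the root-group morphisms across the big cell $U^-\cdot Z_G(S)\cdot U^+$ and invoke the extension of rational homomorphisms from a dense open set, though your generators-and-relations route is equivalent in spirit. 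Since the paper itself imports the theorem, none of this affects the paper; but as a standalone proof attempt, the gap is exactly where you locate it.
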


\begin{remark} By the solution to the Knesser--Tits conjecture, if $F$ is either a local or a global field and $G$ is $F$-isotropic, then $G^+$ is Zariski dense in $G$. This implies also that $Z(G(F))=Z(G)$. We will only apply the theorem under the assumption that $F$ and $F'$ are either local or global, so the condition on $G^+$ is always satisfied and $Z(G'(F'))=Z(G')=1$.
\end{remark}

\begin{theorem} \label{thm:rigidity} Let $G$ be a connected absolutely simple group over a number field $k$, let $S$ be a finite set of valuations, containing all archimedean ones. Assume that every finite index subgroup   of $G(O_S)$  is  superrigid in $\prod_{v\in S} G(k_v)$. Let $\Gamma$ be a finite index subgroup of $G(O_S)$ and let  $\rho : \Gamma \rightarrow G(O_S)$ be a homomorphism with infinite image. Then 
\begin{enumerate}
\item\label{cond:rigidity.1} $\ker(\rho)$ is finite.
\item\label{cond:rigidity.2} If $\rho$ is injective, then $[G(O_S): \Gamma]=[G(O_S): \rho(\Gamma)]$. In particular, if $\rho(\Gamma) \subset \Gamma$, then $\rho$ is an automorphism.
\end{enumerate}
\end{theorem}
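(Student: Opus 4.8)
The plan is to use the Borel--Tits theorem together with the superrigidity hypothesis to understand the homomorphism $\rho$ completely, and then to compute covolumes. First I would pass to the Zariski closure of $\rho(\Gamma)$: since $\Gamma$ is Zariski dense in $G$ (a finite index subgroup of $G(O_S)$, using that $G$ is connected and $S$-arithmetic subgroups are Zariski dense), the image $\rho(\Gamma)$ is a subgroup of $G(O_S)$ whose Zariski closure $H$ is a normal subgroup of $G$; as $G$ is absolutely simple, either $H=1$ (impossible, as the image is infinite) or $H=G$, so $\rho(\Gamma)$ is Zariski dense in $G$. Composing with the adjoint quotient $G \to G^{\mathrm{ad}}$ and noting $Z(G)$ is finite, I get a homomorphism $\Gamma \to G^{\mathrm{ad}}(k) \subset \prod_{v\in S} G^{\mathrm{ad}}(k_v)$ with Zariski dense image; since the image contains a finite index subgroup of an arithmetic group it is unbounded in at least one factor, and in fact unbounded in the relevant factor, so superrigidity applies and this homomorphism extends to $\prod_{v \in S} G(k_v) \to G^{\mathrm{ad}}(k_v)$ for the appropriate local field.

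For \eqref{cond:rigidity.1}: the extended homomorphism on $\prod_{v\in S} G(k_v)$ has, by Borel--Tits applied on each local factor (or directly to the ambient semisimple group), the form $g \mapsto \gamma(g)\phi(\beta(g))$ with $\beta$ a field embedding and $\phi$ an isogeny with invertible derivative, and $\gamma$ central hence trivial since $Z(G^{\mathrm{ad}})=1$. In particular the extension is a continuous homomorphism that is injective modulo the finite center (an isogeny with invertible derivative has finite kernel contained in the center). Restricting to $\Gamma$, the kernel of $\rho$ is then contained in the finite center of $\Gamma$, hence finite. One has to be slightly careful that $\rho$ itself, not just the composition with $G\to G^{\mathrm{ad}}$, has finite kernel: but $\ker(\rho) \subset \ker(\Gamma \to G^{\mathrm{ad}})$ which is contained in $Z(G)\cap\Gamma$, finite.

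For \eqref{cond:rigidity.2}: assume $\rho$ is injective. The extension $\tilde\rho : \prod_{v\in S} G(k_v) \to \prod_{v \in S} G(k_v)$ (after identifying, via $\beta$, which must be the identity on each factor since $k_v$ has no nontrivial continuous self-embeddings, or an isomorphism in the complex case) is, up to central isogeny with invertible derivative on each factor, an isomorphism of the ambient semisimple group; in particular it is locally measure preserving after normalizing Haar measures. Since $\rho$ is injective, $\tilde\rho$ restricted to $\Gamma$ is injective, so by the remark following the definition of ``locally measure preserving'' (applied as in the paragraph about discrete subgroups, here with trivial or finite central kernel absorbed), $\tilde\rho|_\Gamma$ is measure preserving: the covolume of $\Gamma$ equals the covolume of $\rho(\Gamma)=\tilde\rho(\Gamma)$. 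But covolume in $\prod_{v\in S}G(k_v)$ is multiplicative in finite index, so $\covol(\Gamma) = [G(O_S):\Gamma]\cdot\covol(G(O_S))$ and likewise for $\rho(\Gamma)$, whence $[G(O_S):\Gamma] = [G(O_S):\rho(\Gamma)]$. If moreover $\rho(\Gamma)\subseteq\Gamma$, then $[G(O_S):\rho(\Gamma)] = [G(O_S):\Gamma]\cdot[\Gamma:\rho(\Gamma)]$ forces $[\Gamma:\rho(\Gamma)]=1$, i.e. $\rho$ is onto, hence (being injective) an automorphism of $\Gamma$.

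The main obstacle I expect is the bookkeeping around the center and the adjoint quotient: superrigidity and Borel--Tits are cleanest for adjoint target groups, while the statement is about $G(O_S)$ itself, so one must repeatedly pass between $G$, $G^{\mathrm{ad}}$, and $G^{\mathrm{sc}}$ and check that the finite central kernels introduced are harmless for the kernel-finiteness claim and cancel out in the covolume computation. A secondary point needing care is verifying the hypotheses of superrigidity (unboundedness and Zariski density of the image in the target local group), and identifying the field embedding $\beta$ with the identity so that $\tilde\rho$ is genuinely a self-isogeny of $\prod_{v\in S}G(k_v)$ rather than landing in some group over a larger field.
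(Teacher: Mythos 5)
Your argument contains a genuine gap at the very first step, and it is the step the paper works hardest to establish. You assert that since $\Gamma$ is Zariski dense in $G$, the Zariski closure $H$ of $\rho(\Gamma)$ is a \emph{normal} subgroup of $G$, and then invoke absolute simplicity to conclude $H=G$. But $\rho$ is an arbitrary abstract homomorphism, not the restriction of a morphism of algebraic groups, so there is no reason whatsoever for $H$ to be normal: the Zariski closure of the image of an abstract endomorphism of an arithmetic group is just some closed $k$-subgroup. (For instance, for $\SL_n(\mathbb{Z})$ a homomorphism could a priori land in a Borel or a proper Levi.) The paper's Steps 2 and 3 are devoted precisely to ruling this out: one shows by a boundedness/solvability argument (using superrigidity to exclude infinite abelianization) that some adjoint $k$-factor $K$ of $H^0$ receives an unbounded image, then applies superrigidity and Borel--Tits to this factor and compares dimensions to force $H^0 = G$. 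You cannot substitute for this by a normality claim.

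Two further points are glossed over. First, to apply superrigidity you need the image in $G^{\mathrm{ad}}(k_v)$ to be unbounded; you assert this ``since the image contains a finite index subgroup of an arithmetic group,'' but that is circular --- a priori $\rho(\Gamma)$ is just some infinite subgroup of $G(O_S)$ and its unboundedness in a suitable place is part of what must be proved (paper's Step 2). Second, in part \eqref{cond:rigidity.2} you dismiss the field embedding as ``the identity on each factor,'' but the Borel--Tits decomposition of the globalized map $G(k)\to G^{\mathrm{ad}}(k)$ produces a possibly nontrivial automorphism $\alpha$ of $k$, which can permute the places of $k$; one must show $\alpha(S)=S$ (paper's Step 6, using unboundedness at each $v\in S$) before the covolume comparison in $\prod_{v\in S}G(k_v)$ makes sense. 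Your covolume computation at the end is essentially the paper's Step 8 and is fine once these earlier steps are in place.
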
 

\begin{proof} Note first that it suffices to prove (1) and (2) for some finite index subgroup of $\Gamma$, so we can replace $\Gamma$ with a finite index subgroup whenever it is needed. 
 \begin{enumerate}
\item[{\bf Step 1}]  Since $Z(G(O_S))$ is finite, by passing to a finite index subgroup of $\Gamma$ we may assume that $\Gamma \cap Z(G(O_S))=1$. We can also assume that, for any $v\in S$, $\Gamma$ is unbounded in the valuation $v$ (otherwise, after passing to a finite-index subgroup, $\Gamma \subset G(O_{S\smallsetminus \left\{ v \right\}})$). Denote $\mathbf{G}=\prod_{v\in S} G(k_v)$, and let $\delta : G(k) \rightarrow \mathbf{G}$ be the diagonal embedding.

\item[{\bf Step 2}] Let $H=\overline{\rho(\Gamma)}^Z$ be the Zariski closure of the image of $\Gamma$ and let $H^0$ be the connected component of identity. Since the image of $\rho$ is infinite, $H^0$ is not trivial. Replace $\Gamma$ with $\Gamma \cap H^0$ (and still call it $\Gamma$). There is $v\in S$ and a non-trivial adjoint $k$-factor $q:H^0 \rightarrow K$ such that $q(\rho(\Gamma) )$ is unbounded in the valuation $v$.

{\bf Proof:} Assume the contrary. Let $q:H^0 \rightarrow K$ be an adjoint factor defined over $k$, and choose a $k$-embedding $K \hookrightarrow \GL_n$. Since $q$ is defined over $k$, the group $q \circ \rho(\Gamma)$ is commensurable to a subgroup of $K(k)\cap \GL_n(O_S)$, so it is discrete in $\prod_{v\in S} K(k_v)$. Being pre-compact, $q \circ \rho(\Gamma)$ is finite. Since $q \circ \rho(\Gamma)$ is also Zariski dense in the connected group $K$, it follows that $K$ is trivial. Since this holds for every adjoint factor $K$, $H^0$ is solvable. Thus, $H^0$ has an infinite abelianization, so $\Gamma$ has 
a finite index subgroup with 
an infinite abelianization, a contradiction to superrigidity.

\item[{\bf Step 3}] $G=H=H^0$ and $K=G^{ad}$. I.e., $\rho(\Gamma)$ is Zariski dense.

{\bf Proof:} Since $q \circ \rho(\Lambda)$ is Zariski dense in $K$ and unbounded in $K(k_v)$, Superrigidity implies that $q \circ \rho$ extends to a map $f_v:\mathbf{G} \rightarrow K(k_v)$, so, in particular, there is $w\in S$ and a non-trivial map $f_{w,v}:G(k_w) \rightarrow K(k_v)$. By Borel--Tits, there is a field homomorphism $\beta=\beta_{w,v}: k_w \rightarrow k_v$ and a non-trivial algebraic homomorphism $\phi: {}^{\beta} G \rightarrow K$ such that $f_{w,v}$ is the composition of $\beta: G(k_w) \rightarrow {}^\beta G(k_v)$ and $\phi$. Since ${}^\beta G$ is simple, $\dim K \geq \dim {}^\beta G = \dim G \geq \dim H^0 \geq \dim K$, so $H^0$ is open in $G$. Since $G$ is connected, we get that $G=H^0=H$ and $K=G^{ad}$. 

\item[{\bf Step 4}] $\ker(\rho)$ is finite.

{\bf Proof:} In the last step we showed that $f_{w,v}(G(k_w))$ is Zariski dense in $K$. Since $K$ has trivial center, the image under $f_v$ of any other factor of $\mathbf{G}$ is trivial. It follows that $f_v$ is the composition of the projection $\mathbf{G} \rightarrow G(k_w)$ and $f_{w,v}$. Hence, $\rho$ is the composition of the embedding $\Gamma \rightarrow G(k_w)$ and $f_{w,v}$. By Borel--Tits, $f_{w,v}$ is a composition of a field homomorphism, which is necessarily injective, and a non-trivial central isogeny, so the kernel of $f_{w,v}$ is finite.

\item[{\bf Step 5}] Let $f_v:\mathbf{G} \rightarrow G^{ad}(k_v)$ be the map constructed in Step 3. Then $f_v(\delta(G(k))) \subset G^{ad}(k)$.

{\bf Proof:} Denote the algebraic closure of $k_w$ by $\overline{k_w}$. Let $g\in G(k)$, and assume that $f_v(\delta(g))\in G^{ad}(k_w)\smallsetminus G^{ad}(k) \subset G^{ad}(\overline{k_w})\smallsetminus G^{ad}(k)$. Then there is a field automorphism $\sigma\in \Gal(\overline{k_w}/k)$ such that $\sigma(f_v(\delta(g)))\neq f_v(\delta(g))$. Denote the conjugation by an element $h$ by $c_h$. Since $g\in G(k)$, there is a finite-index subgroup $\Lambda \subset \Gamma$ such that $c_g(\Lambda) \subset \Gamma$. It follows that $c_{f_v(\delta(g))}(f_v(\delta(\Lambda))) \subset f_v(\delta(\Gamma))=q\circ \rho(\Gamma) \subset G^{ad}(k)$. We get that, for each $h\in f_v(\delta(\Lambda))$, $c_{f_v(\delta(g))}(h)=c_{\sigma(f_v(\delta(g))}(h)$, meaning that $\sigma(f_v(\delta(g))) \left( f_v(\delta(g)) \right)^{-1}\neq 1$ commutes with $f_v(\delta(\Lambda))$. Since $f_v(\delta(\Lambda))$ has finite index in $f_v(\delta(\Gamma))$, it is Zariski dense. Since $G^{ad}$ has trivial center, we get a contradiction.

\item[{\bf Step 6}] Let $f=f_v \circ \delta:G(k) \rightarrow G^{ad}(k)$. By Borel--Tits, there is a field endomorphism $\alpha : k \rightarrow k$ and a central isogeny $\psi: {}^\alpha G \rightarrow G^{ad}$ such that $f=\psi \circ \alpha$. Since the characteristic of $k$ is zero, $\alpha$ is an automorphism. The automorphism $\alpha$ defines a bijection on the set of valuations of $k$ by $(\alpha(w))(x)=w(\alpha ^{-1}(x))$. We claim that $\alpha(S)=S$.

{\bf Proof:} 
Note that by our assumption $w \in S$ iff $G(O_S)$ is unbounded in $G(k_v)$.
Let $w\in S$. By our assumptions, $G(O_S)$ is unbounded in the $w$ valuation, so $\alpha(G(O_S))$ is unbounded in the $\alpha(w)$ valuation. Since $\psi$ is a central isogeny, this implies that $\psi(\alpha(G(O_S)))$ is unbounded in the $\alpha(w)$ valuation. Since $\psi(\alpha(G(O_S)))$ is commensurable to $q (\rho (\Gamma))$ and $q(\rho(\Gamma))\cap G^{ad}(O_S)$ has finite index in $q(\rho(\Gamma))$, it follows that $G^{ad}(O_S)$ is unbounded in the $\alpha(w)$ valuation, so $\alpha(w)\in S$.

\item[{\bf Step 7}] Let $q:G \rightarrow G^{ad}$ be the quotient by the center from before. Let 
\[
\mathbf{q} : \mathbf{G}=\prod_{w\in S} G(k_w) \rightarrow \prod_{w\in S} G^{ad}(k_w)= \prod_{w\in S} G(k_w)/Z(G(k_w))=\mathbf{G} / Z(\mathbf{G})
\]
be the map induced by $q$. Then the composition $\Gamma \stackrel{\rho}{\rightarrow} G(O_S) \stackrel{\delta}{\rightarrow} \mathbf{G} \stackrel{\mathbf{q}}{\rightarrow} \mathbf{G}/Z(\mathbf{G})$ extends to a locally measure preserving map $\mathbf{h}: \mathbf{G} \rightarrow \mathbf{G} / Z(\mathbf{G})$ (i.e., $\mathbf{h} \circ \delta = \mathbf{q} \circ \delta \circ \rho$), whose kernel is $Z(\mathbf{G})$.

{\bf Proof:} For every $w\in S$, the map $f:G(k) \rightarrow G^{ad}(k)$ is uniformly continuous if we put the $w$-topology on $G(k)$ and the $\alpha(w)$-topology on $G^{ad}(k)$, so it extends to a continuous map $h_w: G(k_w) \rightarrow G^{ad}(k_{\alpha(w)})$. Let $\mathbf{h}:\mathbf{G} \rightarrow \prod_{w\in S} G^{ad}(k_w) = \mathbf{G}/Z(\mathbf{G})$ be the product map. Each $h_w$ is a composition of an isomorphism and a central isogeny, so it is locally measure preserving. It is easy to see that $\mathbf{h}$ extends $\mathbf{q} \circ \delta \circ \rho$.

\item[{\bf Step 8}] We have $[G(O_S): \Gamma]=[G(O_S): \rho(\Gamma)]$.

{\bf Proof:} By Step 7, $\mathbf{q}(\delta(\rho(\Gamma)))=\mathbf{h}(\delta(\Gamma))$. We have
\[
\covol_{\mathbf{G}/Z(\mathbf{G})}(\mathbf{h}(\delta(\Gamma)))=\covol_{\mathbf{G}/Z(\mathbf{G})}(\mathbf{h}(\delta(\Gamma) Z(\mathbf{G})))=\covol_{\mathbf{G}}(\delta(\Gamma) Z(\mathbf{G}))=
\]
\[
=\covol_{\mathbf{G}}(\delta(\Gamma)) \cdot [\delta(\Gamma) Z(\mathbf{G}) : \delta(\Gamma)]=\covol_{\mathbf{G}}(\delta(\Gamma)) \cdot |Z(\mathbf{G})|,
\]
where the first equality is because the kernel of $\mathbf{h}$ is $Z(\mathbf{G})$, the second is because $\mathbf{h}$ is locally measure preserving, the third is clear, and the forth is because $Z(\mathbf{G}) \cap \delta(\Gamma) \subset \delta(Z(\Gamma))=1$. The same proof shows that
\[
\covol_{\mathbf{G} / Z(\mathbf{G})}(\mathbf{q}(\delta(\rho(\Gamma))))=\covol_{\mathbf{G}}(\delta(\rho(\Gamma))) \cdot |Z(\mathbf{G})|,
\]
which implies the claim.
\end{enumerate}
This complete the proof Theorem \ref{thm:rigidity}.
\end{proof}

\begin{corollary}\label{cor:com.rigid}
Let $\Phi$ be a group which is abstractly commensurable to an irreducible non-uniform higher-rank  lattice. Let $\rho:\Phi \rightarrow \Phi$ be an endomorphism with an infinite image.
Then: 
\begin{enumerate}
\item\label{cor:com.rigid.1} $\ker \rho$ is finite.
\item\label{cor:com.rigid.2} If $\rho$ is injective then $\rho$ is an automorphism. 
\end{enumerate}
\end{corollary}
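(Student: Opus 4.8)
The plan is to deduce the corollary from Theorem \ref{thm:rigidity} by reducing the statement about an abstractly commensurable group $\Phi$ to a statement about a finite-index subgroup of $G(O_S)$ for an appropriate arithmetic datum. First I would unwind the hypothesis: since $\Phi$ is abstractly commensurable to an irreducible non-uniform higher-rank lattice, by Margulis arithmeticity there is a number field $k$, a finite set $S$ of valuations containing the archimedean ones, and a connected absolutely simple simply connected group $G$ over $k$ such that $\Phi$ contains a finite-index subgroup $\Phi_0$ isomorphic to a finite-index subgroup $\Gamma_0$ of $G(O_S)$. By the superrigidity of higher-rank lattices (Margulis), every finite-index subgroup of $G(O_S)$ is superrigid in $\mathbf{G}=\prod_{v\in S}G(k_v)$, so the hypotheses of Theorem \ref{thm:rigidity} are available for $G$, $S$ and any finite-index subgroup of $G(O_S)$.

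Next I would handle \eqref{cor:com.rigid.1}. Given $\rho:\Phi\to\Phi$ with infinite image, set $\Psi:=\rho^{-1}(\Phi_0)\cap\Phi_0$, a finite-index subgroup of $\Phi$; then $\rho(\Psi)\subseteq\Phi_0$, and $\rho(\Psi)$ is still infinite because $\Phi_0$ has finite index in $\Phi$ (so $\rho(\Phi)$ meets $\Phi_0$ in a finite-index, hence infinite, subgroup, and we can shrink $\Psi$ accordingly). Transporting through the isomorphism $\Phi_0\cong\Gamma_0\le G(O_S)$, this gives a homomorphism $\bar\rho$ from a finite-index subgroup $\Gamma_1\le G(O_S)$ to $\Gamma_0\le G(O_S)$ with infinite image, so Theorem \ref{thm:rigidity}\eqref{cond:rigidity.1} gives that $\ker\bar\rho$ is finite; hence $\ker(\rho|_\Psi)$ is finite, and since $[\Phi:\Psi]<\infty$ and $\ker\rho\cap\Psi=\ker(\rho|_\Psi)$, the kernel $\ker\rho$ is finite as well.

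For \eqref{cor:com.rigid.2}, assume $\rho$ is injective. Restricting as above, I obtain an injective homomorphism $\bar\rho:\Gamma_1\to\Gamma_0\le G(O_S)$, where $\Gamma_1,\Gamma_0$ are finite-index subgroups of $G(O_S)$. Theorem \ref{thm:rigidity}\eqref{cond:rigidity.2} then gives $[G(O_S):\Gamma_1]=[G(O_S):\bar\rho(\Gamma_1)]$, so $[\Gamma_0:\bar\rho(\Gamma_1)]$ is finite; dualizing, $\rho$ maps a finite-index subgroup of $\Phi$ isomorphically onto another finite-index subgroup, with index governed by the index of the source. To conclude that $\rho$ is onto $\Phi$ itself, I would combine the index equality with finiteness of the kernel: an injective endomorphism of $\Phi$ that restricts to a finite-index-preserving self-map on a commensurated family of finite-index subgroups must have image of finite index in $\Phi$, and then an iteration (or a Euler-characteristic / covolume bookkeeping as in Steps 7--8 of Theorem \ref{thm:rigidity}) forces the index to be $1$. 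The main obstacle I anticipate is precisely this last bookkeeping: passing cleanly between $\Phi$ and its finite-index subgroup, keeping track of indices under the isomorphism with the arithmetic group, and ruling out the possibility that $\rho(\Phi)$ is a proper finite-index subgroup of $\Phi$ — this is where one must use that an injective endomorphism cannot strictly decrease a suitable numerical invariant (covolume, or minimal number of generators modulo the congruence subgroup structure), rather than just that it preserves it on one subgroup.
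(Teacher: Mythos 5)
Your reduction to Theorem \ref{thm:rigidity} and your proof of part \eqref{cor:com.rigid.1} match the paper's argument: restrict $\rho$ to a finite-index subgroup $\Gamma_1$ with $\rho(\Gamma_1)$ landing in the arithmetic subgroup, note the image is still infinite, and apply Theorem \ref{thm:rigidity}\eqref{cond:rigidity.1}. That part is fine.

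For part \eqref{cor:com.rigid.2}, however, you stop exactly at the point that needs to be proved, and the "iteration or covolume bookkeeping" you propose for closing it is not needed; worse, your stated worry --- that preserving the index ``on one subgroup'' is insufficient and that one must show an injective endomorphism cannot decrease some numerical invariant --- is misplaced. The paper closes the argument with a two-line index computation that you already have all the ingredients for. From Theorem \ref{thm:rigidity}\eqref{cond:rigidity.2} you get $[G(O_S):\Gamma_1]=[G(O_S):\rho(\Gamma_1)]$, hence $[\Phi:\Gamma_1]=[\Phi:\rho(\Gamma_1)]$ after multiplying by the fixed index $[\Phi:\Gamma]$. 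Now use injectivity of $\rho$ on \emph{all} of $\Phi$, not just on $\Gamma_1$: it gives $[\rho(\Phi):\rho(\Gamma_1)]=[\Phi:\Gamma_1]$. Since $\rho(\Gamma_1)\le\rho(\Phi)\le\Phi$, multiplicativity of the index yields
\[
[\Phi:\rho(\Gamma_1)]=[\Phi:\rho(\Phi)]\cdot[\rho(\Phi):\rho(\Gamma_1)]=[\Phi:\rho(\Phi)]\cdot[\Phi:\Gamma_1]=[\Phi:\rho(\Phi)]\cdot[\Phi:\rho(\Gamma_1)],
\]
and since $[\Phi:\rho(\Gamma_1)]$ is finite this forces $[\Phi:\rho(\Phi)]=1$, i.e.\ $\rho$ is surjective. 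So the gap is real in the sense that your write-up does not contain a proof of surjectivity, but it is filled by this elementary counting step rather than by any further rigidity or covolume input.
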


\begin{proof}  Margulis' Arithmeticity Theorem implies that there exist
connected absolutely simple group $G$ defined over a number field $k$, a finite set of valuations $S$ which  contains all archimedean ones
such that $G(K_v)$ is unbounded for every $v \in S$,
and  a finite-index subgroup $\Gamma$ of $G(O_S)$ such that $\Gamma$ is isomorphic to  a finite index subgroup of $\Phi$. 
Margulis' superrigidity theorem implies that all finite index subgroups of $G(O_S)$ are superrigid in $\prod_{v\in S} G(k_v)$. 
We identify $\Gamma$ with its image in $\Phi$.   There exists a finite index subgroup $\Gamma_1$ of $\Gamma$
such that $\rho(\Gamma_1) \le \Gamma$. 

Since $\Gamma_1$ has a finite index in $\Phi$ then $\rho(\Gamma_1)$ is infinite. Part  \eqref{cond:rigidity.1} of Theorem \ref{thm:rigidity}  implies that $\ker\rho \cap \Gamma_1$ is finite.  Since $\Gamma_1$ has a finite index in $\Phi$,
$\ker\rho$ is also finite. 
 
Assume that $\rho$ is injective. Part \eqref{cond:rigidity.2} of Theorem \ref{thm:rigidity} implies that 
$$[G(O_S):\Gamma][\Gamma:\Gamma_1]=[G(O_S):\Gamma_1]=[G(O_S):\rho(\Gamma_1)]=[G(O_S):\Gamma][\Gamma:\rho(\Gamma_1)].$$
Thus, $[\Gamma:\Gamma_1]=[\Gamma:\rho(\Gamma_1)]$ and $[\Phi:\Gamma_1]=[\Phi:\Gamma][\Gamma:\Gamma_1]=[\Phi:\Gamma][\Gamma:\rho(\Gamma_1)]=[\Phi:\rho(\Gamma_1)]$.
 Since $\rho$ is injective, $[\rho(\Phi):\rho(\Gamma_1)]=[\Phi:\Gamma_1]=[\Phi:\rho(\Gamma_1)]$ and $\rho$ is surjective. 
\end{proof}

\begin{lemma}\label{lem:gen.prop} Let $\Phi$ be a group which is abstractly commensurable to an irreducible non-uniform higher-rank  lattice. Then
\begin{enumerate}
\item\label{lemma:gen.prop.1} $Z(\Phi)$ is finite.
\item\label{lemma:gen.prop.2} $\Phi$ has a maximal finite normal subgroup. 
\item \label{lemma:gen.prop.3} $\Phi$ is finitely presented. 
\item\label{lemma:gen.prop.4} There exists a constant $N$ such that every finite subgroup of $\Phi$ has a normal abelian subgroup of 
index at most $N$. 
\item\label{lemma:gen.prop.5} $\Phi$ contains a non-abelian free subgroup. 
\end{enumerate}
\end{lemma}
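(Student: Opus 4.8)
The plan is to reduce every assertion to a statement about a finite-index \emph{arithmetic} subgroup of $\Phi$ and then quote standard structural results about such groups. Exactly as in the proof of Corollary \ref{cor:com.rigid}, Margulis' arithmeticity theorem furnishes a connected absolutely simple algebraic group $G$ over a number field $k$, a finite set $S$ of valuations of $k$ containing all archimedean ones with $G(k_v)$ unbounded for $v\in S$, and a finite-index subgroup $\Gamma\le\Phi$ isomorphic to a finite-index subgroup of $G(O_S)$; identify $\Gamma$ with that subgroup. As a finite-index subgroup of the lattice $G(O_S)$, $\Gamma$ is again a lattice in $\prod_{v\in S}G(k_v)$, so Borel's density theorem shows that $\Gamma$ is Zariski dense in $G$. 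This Zariski density is the one nontrivial input; everything else is formal.

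For \eqref{lemma:gen.prop.1}, any element of $Z(\Phi)$ centralizes $\Gamma$, hence lies in $C_G(\Gamma)=C_G(\overline{\Gamma}^{\mathrm{Zar}})=C_G(G)=Z(G)$, which is finite because $G$ is semisimple in characteristic zero; thus $Z(\Phi)\cap\Gamma$ is finite, and having index at most $[\Phi:\Gamma]$ in $Z(\Phi)$ it forces $Z(\Phi)$ to be finite. For \eqref{lemma:gen.prop.2}, if $N\nom\Phi$ is finite then $N\cap\Gamma\nom\Gamma$ is finite, the Zariski-closed set of $g\in G$ with $g(N\cap\Gamma)g^{-1}=N\cap\Gamma$ contains the dense subgroup $\Gamma$ and hence equals $G$, and connectedness of $G$ makes the conjugation action of $G$ on the finite group $N\cap\Gamma$ trivial; so $N\cap\Gamma\subseteq Z(G)$ and, since $[N:N\cap\Gamma]\le[\Phi:\Gamma]$, we get $|N|\le|Z(G)|\cdot[\Phi:\Gamma]$. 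Finite normal subgroups of $\Phi$ therefore have uniformly bounded order, and as the product of two finite normal subgroups is finite and normal, one of maximal order contains all of them, which is \eqref{lemma:gen.prop.2}.

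For \eqref{lemma:gen.prop.3}, characteristic-zero arithmetic groups---in particular $G(O_S)$ and its finite-index subgroup $\Gamma$---are finitely presented (Borel--Serre), and finite presentability is inherited by finite-index subgroups and by finite-index overgroups (Reidemeister--Schreier), so $\Phi$ is finitely presented. For \eqref{lemma:gen.prop.4} and \eqref{lemma:gen.prop.5}, observe that $\Gamma$ is linear over a number field, so inducing a faithful representation of the finite-index subgroup $\Gamma$ up to $\Phi$ exhibits $\Phi$ as a finitely generated linear group in characteristic zero. Jordan's theorem then yields a constant $N$, depending only on the dimension of this representation, such that every finite subgroup of $\Phi$ has a normal abelian subgroup of index at most $N$, giving \eqref{lemma:gen.prop.4}. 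Finally $\Phi$ is not virtually solvable: a solvable finite-index subgroup of $\Phi$ would have solvable Zariski closure, yet its intersection with $\Gamma$ is Zariski dense in the non-solvable group $G$; so the Tits alternative produces a non-abelian free subgroup of $\Phi$, proving \eqref{lemma:gen.prop.5}.

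I do not anticipate a genuine obstacle: the only ingredient that is not bookkeeping is Borel's density theorem (used for \eqref{lemma:gen.prop.1} and \eqref{lemma:gen.prop.2}), together with the black boxes of Borel--Serre, Jordan, and Tits. The one point deserving a moment's care is \eqref{lemma:gen.prop.2}, namely the bound on the orders of finite normal subgroups of $\Phi$ that is \emph{uniform} in the subgroup, since that is exactly what lets one pass from ``every finite normal subgroup is small'' to ``there is a largest finite normal subgroup.''
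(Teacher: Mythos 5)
Your proof is correct and is essentially the same as the paper's, which simply cites as ``well known'' that a higher-rank lattice has finite center and that all its finite normal subgroups are central, and then quotes Borel--Serre, Jordan, and Tits for parts (3)--(5). You fill in the well-known facts via Margulis arithmeticity plus Borel density, which is exactly the standard justification and not a genuinely different route; the only phrasing to tighten is in part (1), where ``any element of $Z(\Phi)$ lies in $C_G(\Gamma)$'' should be applied only to elements of $Z(\Phi)\cap\Gamma$ (which is what your conclusion in fact uses), since arbitrary elements of $Z(\Phi)$ need not lie in $G$.
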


\begin{proof} It is well knows that a higher-rank lattice $\Gamma$ has a finite center and that every finite normal subgroup of $\Gamma$ is central. 
Parts \eqref{lemma:gen.prop.1} and  \eqref{lemma:gen.prop.2} easily follows from these facts. For part \eqref{lemma:gen.prop.3}
recall that lattices in semisimple group (of characteristic zero) are finitely presented and that finite presentability  is preserved under 
abstract commensurability. Part \eqref{lemma:gen.prop.4} is just Jordan's theorem about finite linear group of characteristic zero.  
Part \eqref{lemma:gen.prop.5} follows from Tit's alternative \cite{Tit}. 
\end{proof}

We can now prove Theorem \ref{thm:sr.prime}.

\begin{proof}[Proof of Theorem \ref{thm:sr.prime}] By Theorem \ref{thm:prime.criterion}, we need to show that there is a generating set $g_1,\ldots,g_n$ and a formula $\phi(x_1,\ldots,x_n)$ such that, for any tuple $(h_1,\ldots,h_n)\in \Phi^n$, if $\phi(\vec{h})$ holds, then there is an automorphism of $\Phi$ sending $g_i$ to $h_i$ for every $1 \le i \le n$.

We are going to use freely the facts mentioned in Lemma  \ref{lem:gen.prop}. Find a generating tuple $g_1,\ldots g_n$  and let $r_1,\ldots,r_a$ be the corresponding defining relations. Let $w_1,\ldots,w_b$ be words such that $\left\{ w_1(\vec{g}),\ldots,w_b(\vec{g}) \right\}$ is the set of non-trivial elements in the maximal finite normal subgroup of $\Phi$. Let $N$ be the constant defined  in part \eqref{lemma:gen.prop.4}
\label{lemma:gen.prop.4} of Lemma  \ref{lem:gen.prop}.  Since $\Phi$ contains a non-abelian free subgroup, there are words $u_1,u_2$ such that $[u_1(\vec{g})^N,u_2(\vec{g})^N]\neq 1$. Let $\phi(x_1,\ldots,x_n)$ be the formula
\[
\left( [u_1(\vec{x})^N,u_2(\vec{x})^N]\neq 1\right) \wedge \left( \bigwedge_{j=1}^a r_j(\vec{x})=1 \right) \wedge \left( \bigwedge_{i \leq b} w_i(\vec{x}) \neq 1) \right).
\]
Assume that $h_1,\ldots,h_n\in \Phi$ and $\phi(\vec{h})$ holds. There exists an endomorphism $\rho:\Gamma\rightarrow \Gamma$ which sends $g_i$ to $h_i$
for every $1 \le i \le n$. 
Since $[u_1(\vec{h})^N,u_2(\vec{h})^N]\neq 1$, 
the images of the form
$\rho(u_i(\vec{g}))=u_i(\vec{h})$ are not contained in any finite subgroup of $\Gamma$. 
Hence, the image of $\rho$ is infinite. 

Part  \eqref{cor:com.rigid.1} of Corollary \ref{cor:com.rigid} implies that $\ker(\rho)$ is finite, and hence contained in $\{1\}\cup \left\{ w_1(\vec{g}),\ldots,w_b(\vec{g}) \right\}$. 
By the definition of $\phi$, we get that $\rho$ is one-to-one. Part  \eqref{cor:com.rigid.2} of Corollary \ref{cor:com.rigid} implies  that $\rho$ is an automorphism, confirming the required condition.
\end{proof}

\begin{remark} The converse of Theorem \ref{thm:sr.prime}, namely, that a prime lattice is superrigid, is false: by \cite{Sel09}, torsion-free cocompact lattices in $SO(n,1)$, $n \geq 3$ are prime.
It is well known that these lattices are not necessarily (and probably never) superrigid. 
\end{remark}

\begin{remark} Prime  groups need not be first-order rigid. For example, any cocompact lattice in $\Sp(n,1)$ satisfies the assumptions of Theorem \ref{thm:rigidity} (and hence prime) but is not first order rigid by Theorem 7.6 of \cite{Sel09}.
\end{remark}

\begin{remark} The crucial property needed in the proof of Theorem \ref{thm:sr.prime} above is the property stated in Corollary \ref{cor:com.rigid}: Every injective endomorphism of $\Gamma$ with an infinite image is an automorphism. This property does not hold for positive characteristic higher rank lattices.  
For example, if $F$ be a finite a field and $n \ge 3$ then $\SL_n(F[t])$ is supperrigid but it has many proper subgroups that are isomorphic to itself, e.g., $\SL_n(F[t^m])$ for every $m \ge 2$. Nevertheless, we can prove that $\SL_n(F[t])$ and all its finite index subgroups are prime and first order rigid.
\end{remark}

\section{Brenner property for higher-rank groups} \label{sec:Raghunathan}

\begin{theorem} \label{thm:Raghunathan} Let $k$ be a global field and let $S$ be a finite set of valuations, containing all archimedean ones. 
Let $G$ be a simple algebraic group over $k$ which is $k$-isotropic and has $S$-rank at least 2. Let $P$ be a maximal proper $k$-parabolic subgroup, and let $\Gamma$ be a finite index subgroup of $G(O_S)$. There is a constant $C$ such that, for any non-central $\gamma \in \Gamma$, the set $[\gamma ^ \Gamma \cup (\gamma ^{-1})^\Gamma]^C$ contains a finite index subgroup of $U \cap \Gamma$ where $U$ is the unipotent radical of $P$.
\end{theorem}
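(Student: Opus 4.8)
The plan is to exploit the higher-rank hypothesis via the Kazhdan–Margulis style of argument on unipotent subgroups, combined with commutator calculus inside $\Gamma$. First I would reduce to the case where $\Gamma = G(O_S)$ itself, since passing to a finite-index subgroup only changes the constant $C$; indeed if the claim holds for $G(O_S)$ with constant $C_0$, then for $\Gamma$ of index $m$ one intersects with $\Gamma$ and absorbs the index into the constant, using that conjugacy classes in $\Gamma$ are contained in the $\Gamma$-conjugacy-controlled products of $G(O_S)$-classes up to a bounded factor. Next, fix a non-central $\gamma \in \Gamma$ and set $S_\gamma = \gamma^\Gamma \cup (\gamma^{-1})^\Gamma$. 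As in the proof of Lemma~\ref{lem:Brenner}, for any $t \in [S_\gamma]^k$ and any $q \in \Gamma$ we have $[t,q] \in [S_\gamma]^{2k}$ and $t^\Gamma \cup (t^{-1})^\Gamma \subseteq [S_\gamma]^k$, so the strategy is to produce, by a bounded number of commutator operations, a non-trivial element lying in $U \cap \Gamma$, and then to show the subgroup it generates (after further bounded commutator steps) has finite index in $U \cap \Gamma$.

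The core step is the following: since $G$ has $S$-rank at least $2$, the $S$-arithmetic group $\Gamma$ has \emph{bounded generation by unipotents relative to root subgroups}, and more to the point, one can apply a theorem in the spirit of Raghunathan (hence the section title) or of the relative Kazhdan property: the group generated by the $\Gamma$-conjugates of any fixed non-trivial element of $U \cap \Gamma$ already contains a finite-index subgroup of $U \cap \Gamma$, with the number of conjugates needed bounded independently of the element. So I would argue in two stages. Stage one: show $[S_\gamma]^{C_1}$ meets $U \cap \Gamma$ non-trivially. For this, one uses that $\gamma$ is non-central together with the $S$-rank $\geq 2$ condition to find, after at most a bounded number of commutators with suitable elementary/root elements of $\Gamma$ (chosen to successively "peel off" the semisimple and higher unipotent parts, exactly as in the $\SL_n$ computation where one conjugates $\gamma$ into the form $\left(\begin{smallmatrix} I & A \\ 0 & B\end{smallmatrix}\right)$ and then commutates down to $\left(\begin{smallmatrix} I & A'' \\ 0 & I\end{smallmatrix}\right)$ and finally into a single root subgroup), a non-identity element of $U \cap \Gamma$. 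The higher-rank hypothesis is what guarantees that $U$ is large enough and that these commutators do not all collapse to the identity. Stage two: having one non-trivial $u \in U \cap \Gamma \cap [S_\gamma]^{C_1}$, all its $\Gamma$-conjugates lie in $[S_\gamma]^{C_1}$, and using the relative property (T)/bounded-generation input, a bounded number of products of such conjugates sweeps out a finite-index subgroup of $U \cap \Gamma$; this contributes another bounded multiple to the exponent, giving the final constant $C$.

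The main obstacle I anticipate is Stage one in full generality — i.e., getting from an \emph{arbitrary} non-central $\gamma$ into the unipotent radical $U$ of the \emph{given} maximal parabolic $P$, with a uniform bound on the number of commutators, for a general $k$-isotropic simple $G$ of $S$-rank $\geq 2$, rather than just for $\SL_n$. For $\SL_n$ this is the explicit matrix manipulation of Lemma~\ref{lem:Brenner}; in general one needs structure theory of parabolics and their unipotent radicals, and a case analysis on the position of $\gamma$ relative to $P$ (e.g. whether $\gamma$ normalizes a proper parabolic, whether its image in a Levi quotient is trivial, etc.), carefully tracking that each reduction costs only a bounded number of commutators. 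A clean way to handle this uniformly is probably to first conjugate (within $\Gamma$, using that conjugates of $\gamma$ are available for free in $[S_\gamma]^1$) and commutate to land in \emph{some} proper unipotent subgroup, then use that $U$ together with $U^-$ (or a bounded set of $\Gamma$-conjugates of $U$) generates a finite-index subgroup and that any non-trivial unipotent element can, via bounded commutators with root groups, be moved into $U$ — invoking the $S$-rank $\geq 2$ hypothesis precisely to ensure the relevant root group is non-trivial over $k$ and defined over $O_S$. The uniformity of all these bounds over the (infinite) family of non-central $\gamma$ is the crux, and it is exactly what the higher-rank hypothesis buys.
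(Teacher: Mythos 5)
Your proposal differs substantially from the paper's argument, and both stages have real gaps.

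The paper does not proceed by first finding a single non-trivial element of $U\cap\Gamma$ and then sweeping. Instead it follows Raghunathan's proof of the congruence subgroup property: (i) it shows that $[\gamma^\Gamma\cup(\gamma^{-1})^\Gamma]^{C_1}$ is Zariski dense in $G$ for a \emph{uniform} $C_1$ --- the key observation being that the Zariski closures of $[\gamma^G\cdot(\gamma^{-1})^G]^n$ are irreducible varieties of nondecreasing dimension through $1$, so they stabilize to a normal subgroup of $G$, hence to $G$ itself, and the stabilization happens by $n=\dim G$; (ii) it writes generic elements of the dense set as $uwb$ in the big Bruhat cell for $U'\times P$ and performs a short explicit computation (conjugating by $x\in P\cap P^w$ and by $[x,u]$, then taking a quotient) to show that a rational map $f(u,b,x)=b^{-1}(x^{-1})^wbuxu^{-1}$ with values in $P$ has image inside $[\gamma^\Gamma\cup(\gamma^{-1})^\Gamma]^{2C_1}$; (iii) it invokes Raghunathan's Lemma 2.8 to conclude that the group generated by this image contains the identity component of the Zariski closure of $L(O_S)$, where $L$ is the Levi; (iv) it then climbs the ascending central series $U^1\subset\cdots\subset U^N=U$, using that the Levi acts on each $U^i/U^{i+1}$ with no nonzero invariant linear functional (Raghunathan's Claim 2.11, which is where the higher-rank hypothesis enters), so a fixed finite set of commutators already gives finite index in each layer. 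The final constant is of the form $4tNC_1$.

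Concretely, the gaps in your proposal: in Stage~1 you hope to generalize the $\SL_n$ commutator peeling to arbitrary $k$-isotropic $G$ with uniform bounds; the paper never attempts that --- uniformity is obtained by the Zariski-density argument, which works for all non-central $\gamma$ at once because the bound on the saturation step depends only on $\dim G$. Your explicit approach would require a delicate, type-by-type case analysis (not just $\SL_n$-style matrices) with no evident source of uniformity. In Stage~2 the assertion that, for any fixed non-trivial $u\in U\cap\Gamma$, a \emph{bounded} number of products of $\Gamma$-conjugates of $u$ already contains a finite-index subgroup of $U\cap\Gamma$ is essentially a special case of the theorem being proved; neither relative property (T) nor bounded generation by root subgroups gives this. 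Relative (T) controls unitary representations, not bounded word length in normal closures, and bounded generation statements concern generation by whole root subgroups, not by conjugates of a single element. The paper's replacement for this step is precisely the Levi action on the central series plus Raghunathan's lemmas, which you do not reproduce. Finally, your reduction to $\Gamma=G(O_S)$ goes the wrong direction: $\gamma^\Gamma\subset\gamma^{G(O_S)}$, so a bound for $G(O_S)$ does not immediately transfer to a finite-index $\Gamma$; the paper avoids this by proving the statement directly for any finite-index subgroup.
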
 

\begin{proof} The claim essentially appears in the proof of Theorem 2.1 of \cite{Rag}. We sketch the argument. Fix a maximal $k$-split torus $S$ contained in $P$. There is a simple $k$-root $\alpha$ and an ordering of the simple $k$-roots such that $P$ is the parabolic corresponding to $\alpha$ and the positive roots. By \cite[\S5]{BT-reductive}, there is $w\in N_G(S)(k)$ that switches the positive and negative roots.
The image of $w$ in the Weyl group is of order 2. This means that $w^2\in C_G(S)$, so $P^{w^2}=P$ (because $C_G(S) \subset P$). In particular, $P\cap P^w$ is $w$-invariant. Let $\alpha '=-w(\alpha)$ (so $\alpha'$ is positive), let $P'$ be the (maximal) parabolic corresponding to $\alpha '$, and let $U'$ be its unipotent radical. By \cite[Theorem 5.15]{BT-reductive}, the map $(u,b) \stackrel{\varphi}{\mapsto} uwb$ is a $k$-isomorphism between $U' \times P$ and an open dense set in $G$. By definition, $\varphi^{-1}$ is also defined over $k$, so $uwb\in G(k)$ implies that $u\in U'(k)$ and $b\in P(k)$. 

We first claim that there is a constant $C_1$ such that $[\gamma ^ \Gamma \cup (\gamma ^{-1})^\Gamma]^{C_1}$ is Zariski dense.
Note that $[\gamma ^ \Gamma \cup (\gamma ^{-1})^\Gamma]^2$ contains a Zariski-dense subset of $\gamma ^G \cdot (\gamma ^{-1})^G$ and the later contains the identity. 
The conjugacy class $\gamma ^G$ is irreducible and has positive dimension, and, hence, so is $[\gamma ^G \cdot (\gamma ^{-1})^G]^n$, for all $n \geq 1$. Note also that $n \mapsto \dim [\gamma ^G \cdot (\gamma ^{-1})^G]^n$ is non-decreasing. If $\dim [\gamma ^G \cdot (\gamma ^{-1})^G]^n = \dim [\gamma ^G \cdot (\gamma ^{-1})^G]^{n+1}$, it follows that the Zariski closures of $[\gamma ^G \cdot (\gamma ^{-1})^G]^n$ and $[\gamma ^G \cdot (\gamma ^{-1})^G]^{n+1}$ coincide. Therefore, the Zariski closure of $[\gamma ^G \cdot (\gamma ^{-1})^G]^n$ is a normal subgroup of $G$, so it must be $G$. 

Let $u\in U'$ and $b\in P$ such that $uwb\in [\gamma ^ \Gamma \cup (\gamma ^{-1})^\Gamma]^{C_1}$ and suppose that $x\in \Gamma \cap P \cap P^w$ satisfies $[x,u]\in \Gamma$. 
We consider the effect of conjugating by $x$ and by $[x,u]:=xux^{-1}u^{-1}$ on the Bruhat decomposition of $uwb$:
\[
 (x u x ^{-1}) w (x^w b x^{-1}) = xux ^{-1} xwb x ^{-1}=x (uwb) x ^{-1}  \in [\gamma ^ \Gamma \cup (\gamma ^{-1})^\Gamma] ^{C_1}
\]
and
\[
(x u x ^{-1})  w (b u x u^{-1} x ^{-1}) =[x,u] (uwb) [x,u] ^{-1} \in [\gamma ^ \Gamma \cup (\gamma ^{-1})^\Gamma] ^{C_1}.
\]
Taking the quotient,
\[
x b ^{-1} (x ^{-1})^w b u x u ^{-1} x ^{-1} \in [\gamma ^ \Gamma \cup (\gamma ^{-1})^\Gamma]^{2C_1}
\]
as $[\gamma ^ \Gamma \cup (\gamma ^{-1})^\Gamma]^{2C_1}$ is closed to conjugation by elements of $\Gamma$,
\begin{equation}\label{eq}
b ^{-1} (x ^{-1})^w b u x u ^{-1} \in [\gamma ^ \Gamma \cup (\gamma ^{-1})^\Gamma]^{2C_1}.
\end{equation}
Note that $U'$ is generated by rational positive roots so it is contained in $P$ and in particular $u \in P$.
Since $P\cap P^w$ is $w$-invariant, our assumptions on $x$ imply that every term in equation \eqref{eq} is in $P$.
Thus, the element in \eqref{eq} is also contained in $P$.  Let 
\[
A=\left\{ (u,b,x) \in U' \times P\times (P\cap P^w) \mid uwb\in \Gamma, [x,u]\in \Gamma, x\in \Gamma \right\} 
\]
and let $f: U' \times P\times (P\cap P^w) \rightarrow P$ be the function
\[
f(u,b,x)=b ^{-1} (x ^{-1})^w b u x u ^{-1}.
\]
We just showed that $f(A) \subset [\gamma ^ \Gamma \cup (\gamma ^{-1})^\Gamma]^{2C_1}$. Let $M$ be the connected component of the Zariski closure of $(P \cap P^w)(O_S)$. We claim that $A$ is Zariski dense in $U' \times P\times M$. Indeed, the collection of $(u,b)$ satisfying $uwb\in \Gamma$ is Zariski dense in $U'\times P$, so it is enough to show that, for every $u\in U'(k)$, the collection of $x$'s satisfying $[x,u]\in \Gamma$ contains a finite-index subgroup of $M(O_S)$. After passing to a finite-index subgroup, we can assume that $\Gamma$ is normal in $G(O_S)$. Consider the polynomial function $x \mapsto [x,u]$. It has $k$-rational coefficients and maps $1$ to $1$. It follows that there is an ideal $\mathfrak{a}$ of $O_S$ such that, if $x\in G(O_S)(\mathfrak{a})$, then $[x,u]\in G(O_S)$. Consider the map $c:M(O_S)(\mathfrak{a}) \cap \Gamma \rightarrow G(O_S)/\Gamma$ defined by $c(x)=[x,u]\Gamma$. Since $[xy,u]=xyuy ^{-1} x ^{-1} u ^{-1} = x[y,u]x ^{-1} [x,u]$, it follows that $c$ is a homomorphism. Every element $x$ in $\ker(c)$ (which has finite index in $M(O_S)(\mathfrak{a}) \cap \Gamma$ and hence in $M(O_S)$) satisfies $[x,u]\in \Gamma$, which is what we wanted to prove.

It follows that, in the notation above, $f(A)$ is Zariski dense in $f(U' \times P\times M)$. Hence, the Zariski closure of $[\gamma ^ \Gamma \cup (\gamma ^{-1})^\Gamma]^{2C_1} \cap P$ contains $f(U' \times P\times M)$. Denoting the Levi of $P$ by $L$, \cite[Lemma 2.8]{Rag} says that the group generated by $f(U' \times P\times M)$ contains the identity component of 
the Zariski-closure 
$\overline{L(O_S)}^Z$ of $L(O_S)$.

Let $U^i$, $i=1,\ldots,N$ be the ascending central series of $U$. Each $U^i/U^{i+1}$ is a vector space on which $P$ acts by conjugation. If $v\in (U^i\cap \Gamma)/(U^{i+1} \cap \Gamma)$ and $z\in f(A)$, then $(\Ad(z)-1)v=[v,z]\in [\gamma ^ \Gamma \cup (\gamma ^{-1})^\Gamma]^{4C_1}$. We will use the following simple lemma:

\begin{lemma} Let $k$ be a global field, $O$ its ring of integers, and $S$ a finite set of valuations. For $h_1,\ldots,h_t\in \GL_n(O_S)$ generating a subgroup $H$, the following are equivalent: \begin{enumerate}
\item\label{con.1} $\linspan \left\{ (h-1) k ^n \mid h\in \overline{H}^Z\right\} = k^n$.
\item $\linspan \left\{ (h-1) k ^n \mid h\in H\right\} =k ^n$.
\item There is no $H$-invariant linear functional on $k^n$.
\item $\linspan \left\{ (h_i-1) k ^n \mid 1 \le i \le t\right\} = k ^n$.
\item $(h_1-1) O_S ^n + \cdots + (h_t-1) O_S ^n$ has a finite index in $O_S ^n$.
\end{enumerate} 
\end{lemma}

By \cite[Claim 2.11]{Rag}, $(\overline{L(O_S)}^Z)^0$ acting on $U^i/ U^{i+1}$ satisfies condition \eqref{con.1}. Since the Zariski closure of $\langle f(A) \rangle$ contains $(\overline{L(O_S)}^Z)^0$, the action of $\langle f(A) \rangle$ also satisfies this condition. It follows that there are finitely many elements $h_1,\ldots,h_t\in f(A)$ that satisfy the claim of the lemma. In particular, $[h_1,(U^i\cap \Gamma)/(U^{i+1} \cap \Gamma)]+\ldots+[h_t,(U^i\cap \Gamma)/(U^{i+1} \cap \Gamma)]$ has finite index in $(U^i\cap \Gamma)/(U^{i+1} \cap \Gamma)$. By induction, it follows that $[\gamma ^ \Gamma \cup (\gamma ^{-1})^\Gamma]^{4tNC_1}\cap U$ has finite index in $U(O_S)$ (and, hence, in $U(O_S)\cap \Gamma$).
\end{proof} 

\begin{corollary}\label{corol:BP} Let $\Phi$ be a group which is abstractly commensurable to an irreducible non-uniform higher-rank lattice. There exits $g \in \Phi$ which has the \BP. 
\end{corollary}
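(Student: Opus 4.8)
The plan is to deduce Corollary~\ref{corol:BP} from Theorem~\ref{thm:Raghunathan} together with the structural facts collected in Lemma~\ref{lem:gen.prop}. By Margulis arithmeticity, $\Phi$ is abstractly commensurable to $G(O_S)$ for some connected absolutely simple $k$-group $G$ with $S$-rank at least $2$; since $\Phi$ is non-uniform, Godement's criterion forces $G$ to be $k$-isotropic, so $G$ has a maximal proper $k$-parabolic subgroup $P$ with unipotent radical $U$. Fix a finite-index subgroup $\Gamma \le G(O_S)$ that is isomorphic to a finite-index subgroup of $\Phi$, and let $C$ and $U \cap \Gamma$ be as produced by Theorem~\ref{thm:Raghunathan}.

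First I would pin down the element $g$. The natural candidate is a well-chosen element of $U \cap \Gamma$ whose centralizer in $\Gamma$ has small center. Concretely, one wants $g$ such that $Z(C_\Gamma(g))$ contains a finite-index subgroup of $U\cap\Gamma$ — for instance an element lying in the center $Z(U)$ of the unipotent radical, or in the last nontrivial term of the descending central series of $U$; such an element is centralized by all of $U$, hence $Z(C_\Gamma(g)) \supseteq$ a finite-index subgroup of $Z(U)\cap\Gamma$, which is infinite (as $G$ is isotropic and $\rank_S G \ge 2$, $U$ is a nontrivial $k$-group and $U\cap\Gamma$ is a nontrivial lattice in $\prod_{v\in S}U(k_v)$, hence infinite). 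Now given any non-central $h\in\Gamma$, Theorem~\ref{thm:Raghunathan} gives that $[h^\Gamma\cup(h^{-1})^\Gamma]^C$ contains a finite-index subgroup $V$ of $U\cap\Gamma$; intersecting $V$ with the finite-index subgroup of $Z(U)\cap\Gamma$ sitting inside $Z(C_\Gamma(g))$, one still gets an infinite (hence nontrivial) group, so $[h^\Gamma\cup(h^{-1})^\Gamma]^C \cap Z(C_\Gamma(g))\neq\{1\}$. For central $h$ there is nothing to prove since by Remark~\ref{rem:BP} such $h$ lies in a finite normal subgroup and the hypothesis $|[S]^D|>D$ rules this out once $D \ge |Z(\Gamma)|$. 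Choosing $D=\max(C,|Z(\Gamma)|)$ shows $g$ has the Brenner property in $\Gamma$.

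Next I would transport the Brenner property from $\Gamma$ to $\Phi$ across the abstract commensurability. This is the step requiring some care: having the Brenner property is not obviously a commensurability invariant, because conjugacy classes and centralizers behave differently in a group and in a finite-index subgroup or overgroup. I would argue in two moves. If $\Gamma_0 \le \Phi$ has finite index and $g\in\Gamma_0$ has the Brenner property in $\Gamma_0$, then for $h\in\Phi$ the set $[h^{\Gamma_0}\cup(h^{-1})^{\Gamma_0}]^D$ is contained in $[h^\Phi\cup(h^{-1})^\Phi]^D$, and $Z(C_{\Gamma_0}(g)) \subseteq Z(C_\Phi(g))\cap\Gamma_0$ up to finite index, so Brenner in $\Gamma_0$ gives (a weak form of) Brenner in $\Phi$ after replacing $h$ by a suitable $\Phi$-conjugate lying in $\Gamma_0$ — using that an element of $\Phi$ not in any finite normal subgroup has a conjugate, or a bounded power, meeting $\Gamma_0$ nontrivially, and that $[h^\Phi]^D$ grows. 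Conversely, going down from $G(O_S)$ to $\Gamma$: if $g$ has the Brenner property in the larger group one restricts; but here I have arranged to prove it directly in a finite-index $\Gamma$, so only the "pass to an overgroup / sibling" direction is needed. In practice the cleanest route is: prove the Brenner property holds in \emph{every} finite-index subgroup of $G(O_S)$ simultaneously (Theorem~\ref{thm:Raghunathan} already allows $\Gamma$ to be any finite-index subgroup), then observe that $\Phi$ and $G(O_S)$ share a common finite-index subgroup $\Gamma$, and finally promote Brenner-in-$\Gamma$ to Brenner-in-$\Phi$ by the overgroup argument just sketched.

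The main obstacle I anticipate is exactly this last transfer: verifying that the Brenner property survives passage to a finite-index overgroup, since the quantifier "for every $h\in\Phi$" ranges over more elements and the relevant centralizer $Z(C_\Phi(g))$ could a priori be larger than $Z(C_\Gamma(g))$ (though by Lemma~\ref{lem:gen.prop}, $Z(\Phi)$ is finite and finite normal subgroups are bounded, which constrains this). One handles it by noting $Z(C_\Phi(g))\cap\Gamma$ has finite index in $Z(C_\Phi(g))$, so a nontrivial element of $[h^\Phi\cup(h^{-1})^\Phi]^{D'}\cap Z(C_\Phi(g))$ can be found inside $\Gamma$ after replacing $D$ by a larger constant $D'$ absorbing the index $[\Phi:\Gamma]$ and the exponent of $Z(C_\Phi(g))/(Z(C_\Phi(g))\cap\Gamma)$, at which point the already-established Brenner property in $\Gamma$ applies. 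Everything else is bookkeeping with the constant $D$.
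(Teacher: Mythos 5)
Your proposal identifies the correct ingredients (Theorem~\ref{thm:Raghunathan}, an element $g$ in $U\cap\Gamma$, a transport step to $\Phi$), but the transport step contains a genuine gap, and the argument is considerably more complicated than it needs to be because you miss a key simplification the paper uses.

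The simplification you miss: for \emph{any} group $\Lambda$ and any $g\in\Lambda$, one always has $\langle g\rangle\subseteq Z(C_\Lambda(g))$, since $g$ itself lies in the center of its own centralizer. Thus, taking $g\in U\cap\Gamma$ of infinite order, the paper only needs to show that $[h^\Phi\cup(h^{-1})^\Phi]^D$ contains a nontrivial element of $\langle g\rangle$ — it never needs the (possibly false) claim that $Z(C_\Gamma(g))$ contains a finite-index subgroup of $Z(U)\cap\Gamma$, and it never needs to compare $Z(C_\Gamma(g))$ with $Z(C_\Phi(g))$. Your proposed comparison actually goes in the wrong direction: since $C_\Gamma(g)=C_\Phi(g)\cap\Gamma$ is \emph{smaller} than $C_\Phi(g)$, its center can only be \emph{larger}, i.e.\ $Z(C_\Phi(g))\cap\Gamma\subseteq Z(C_\Gamma(g))$, which is the opposite of what your argument uses. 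So landing a nontrivial element in $Z(C_\Gamma(g))$ does not by itself put it in $Z(C_\Phi(g))$, and that is exactly what the Brenner property in $\Phi$ demands.

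The second place your argument is too loose is in handling $h\in\Phi\setminus\Gamma$. "A conjugate, or a bounded power, meeting $\Gamma_0$" is not enough: a power $h^{[\Phi:\Gamma]}$ could be trivial, and a conjugate of $h$ need not lie in $\Gamma$. The paper instead uses Remark~\ref{rem:BP} and a counting argument: setting $A=[\Phi:\Gamma]$ and $B=|Z(\Gamma)|$, the hypothesis $|[h^\Phi\cup(h^{-1})^\Phi]^D|>D$ with $D=2ABC$ forces $|[h^\Phi\cup(h^{-1})^\Phi]^{AB}|>AB$; since this set meets at most $A$ cosets of $\Gamma$, pigeonhole produces at least $B$ distinct nontrivial elements of $\Gamma$ in $[h^\Phi\cup(h^{-1})^\Phi]^{2AB}$, at least one of which is non-central. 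Theorem~\ref{thm:Raghunathan} then applies to \emph{that} element, and since $[h^\Phi\cup(h^{-1})^\Phi]^{2AB}$ is a symmetric $\Phi$-normal set, the resulting finite-index subgroup of $U\cap\Gamma$ lies inside $[h^\Phi\cup(h^{-1})^\Phi]^{2ABC}$. Intersecting with the infinite cyclic group $\langle g\rangle\subseteq U\cap\Gamma$ gives a nontrivial element, which is automatically in $Z(C_\Phi(g))$. This cleanly circumvents both of the difficulties your proposal wrestles with.
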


\begin{proof} 
Margulis' Arithmeticity Theorem implies that $\Phi$ has a finite index subgroup $\Gamma$ which satisfies the assumptions of Theorem \ref{thm:Raghunathan}. 
We claim that any element $g \in \Gamma \cap U$ of infinite order has the \BP where $U$ is as in the statement of Theorem  \ref{thm:Raghunathan}. 
Indeed, let $D=2ABC$ where $A:=[\Phi:\Gamma]$, $B=|Z(\Gamma)|$ and  $C$ is the constant defined in Theorem  \ref{thm:Raghunathan}. Let $h \in \Phi$ and assume that 
$|[h^\Phi \cup {h^{-1}}^\Phi]^{D}| > D$. Remark \ref{rem:BP} implies that 
$|[h^\Phi \cup {h^{-1}}^\Phi]^{AB}| > AB$ so $[h^\Phi \cup {h^{-1}}^\Phi]^{2AB}$ contains a non-central element of $\Gamma$. 
The definition of $C$ implies that $[h ^ \Gamma \cup (h ^{-1})^\Gamma]^{2ABC}$ contains a finite index subgroup of $\la g\ra$.
Since $g$ has an infinite order,  $[h ^ \Gamma \cup (h ^{-1})^\Gamma]^{2ABC}\cap \langle g \rangle \ne \{1\}$.
\end{proof}

\begin{lemma}\label{lemma:rank} Let $\Phi$ be a group which is abstractly commensurable to an irreducible higher-rank lattice. There exists a constant $D$ such that any finitely generated abelian subgroup of $\Phi$ is generated by at most $D$ elements. 
\end{lemma}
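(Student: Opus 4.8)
The plan is to reduce, via Margulis' Arithmeticity Theorem and abstract commensurability, to bounding the number of generators of finitely generated abelian subgroups of a fixed $S$-arithmetic group $\GL_N(O_S)$, and then to control such a subgroup through the finite commutative ring it generates, using Borel--Harish-Chandra \cite{BHC} and Dirichlet's $S$-unit theorem for the final uniform estimate.

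\textbf{Step 1 (reduction).} By Margulis' Arithmeticity Theorem, $\Phi$ has a finite-index subgroup $\Gamma_1$ isomorphic to a finite-index subgroup of $G(O_S)$ for a connected absolutely simple $k$-group $G$ and a finite set of places $S$ as in the introduction; a $k$-embedding $G\hookrightarrow \GL_N$ then identifies $\Gamma_1$ with a subgroup of $\GL_N(O_S)$. Since $[\Phi:\Gamma_1]$ is a fixed constant and, for finite-index subgroups $C\le B$ of an abelian group, $d(B)\le d(C)+\log_2[B:C]$ (split a composition series into prime steps), it suffices to find $D_0=D_0(N,k,S)$ bounding $d(A)$ for every finitely generated abelian $A\le \GL_N(O_S)$. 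Here $d(\cdot)$ denotes the minimal number of generators.

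\textbf{Steps 2--3 (the order generated by $A$, and its units).} Given such an $A$, I would attach to it the ring $R:=O_S[A\cup A^{-1}]\subseteq \Mat_N(O_S)$. As $A$ is abelian, $R$ is commutative, and it is finitely generated as an $O_S$-module (an $O_S$-submodule of the Noetherian module $\Mat_N(O_S)$), hence an order in the commutative $k$-algebra $\mathbf R:=R\otimes_{O_S}k\subseteq \Mat_N(k)$, with $\dim_k\mathbf R\le N^2$; and $A$ is a subgroup of the \emph{abelian} group $R^\times$. Now $R^\times$ is the group of $O_S$-integral points (with integral inverse) of the commutative linear algebraic $\Q$-group $\mathbf R^\times$, hence finitely generated by Borel--Harish-Chandra \cite{BHC}. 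To bound its rank uniformly, decompose $\mathbf R$ into local Artinian $k$-algebras $\mathbf R=\prod_j\mathbf R_j$ with residue fields $L_j$, so $\sum_j[L_j:\Q]\le \dim_\Q\mathbf R\le N^2[k:\Q]$; adjoining idempotents gives an order $R'=\prod_j\Lambda_j\supseteq R$ with $\Lambda_j$ an order in $\mathbf R_j$ and $A\le R^\times\le R'^\times=\prod_j\Lambda_j^\times$. Each $\Lambda_j^\times$ is an extension of a finite-index subgroup of the $S$-unit group of $L_j$ — of rank bounded in terms of $N,k,S$ by Dirichlet — by the finitely generated nilpotent group $1+\mathfrak m_j$ of torsion-free rank $\le\dim_\Q\mathbf R_j$; summing over $j$ bounds $\rank(R'^\times)$ by a constant $c_1(N,k,S)$. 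In characteristic zero $1+\mathfrak m_j$ is torsion-free, so the torsion subgroup of $R'^\times$ lies in $\prod_j\mu(L_j)$ and needs $\le N^2[k:\Q]$ generators. Hence $d(A)\le \rank(R'^\times)+d(\text{torsion})\le c_1(N,k,S)+N^2[k:\Q]=:D_0$.

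One can run the argument equally well with the Zariski closure $H=\overline{A}^Z\subseteq\GL_N$ in place of $R$: $H$ is commutative, so $H=H_s\times H_u$ with $H_s$ diagonalizable and $H_u$ unipotent; $\pi_0(H)=\pi_0(H_s)$ embeds diagonally into $\mathbb{G}_m^N$, hence is generated by $\le N$ elements, while $A\cap H^0$ injects into $T(\overline k)\times V(\overline k)$ with $T$ a torus of dimension $\le N$ and $V$ a vector group of dimension $\le N^2$. The only input special to $\GL_N(O_S)$ is that every eigenvalue $\lambda$ of $g\in\GL_N(O_S)$ is an $S$-unit in $k(\lambda)$, a field of degree $\le N$ over $k$ (because $g,g^{-1}$ have $O_S$-entries, so $\lambda,\lambda^{-1}$ are integral over $O_S$), and this bounds the rank of the image of $A$ in $T$; the image in $V$ has rank $\le N^2[k:\Q]$.

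The main obstacle is \emph{uniformity}: the rank of a finitely generated abelian subgroup of $\GL_N$ is not bounded by $N$ — already $\langle 2,3,5,\dots\rangle\subset\GL_1(\Q)$ has infinite rank — so a purely dimension-theoretic count cannot suffice. What makes the bound work is the finiteness of the $S$-unit group, equivalently that the unit group of an $O_S$-order in a finite-dimensional commutative $k$-algebra is finitely generated of rank bounded in terms of the dimension. Granting that, the commensurability reduction, the structure theory of commutative algebraic groups (resp. finite commutative algebras), and additivity of torsion-free rank in short exact sequences are all routine.
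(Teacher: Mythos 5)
Your argument is correct, but it is a genuinely different proof from the one in the paper. The paper's route is much shorter and purely topological/homological: by Selberg's lemma, $\Phi$ has a torsion-free finite-index subgroup $\Gamma$ which is again a higher-rank lattice; such a $\Gamma$ has finite cohomological dimension $C$ (Serre), every subgroup inherits the bound, and since $\mathbb{Z}^n$ has cohomological dimension $n$, every free abelian subgroup of $\Gamma$ has rank at most $C$; the general case follows by comparing a finitely generated abelian $A\le\Phi$ with $A\cap\Gamma$. Your route instead stays inside the arithmetic: you reduce to $\GL_N(O_S)$, pass to the commutative order $R=O_S[A\cup A^{-1}]$, decompose $R\otimes k$ into local Artinian factors, and bound the free rank by Dirichlet's $S$-unit theorem (for the semisimple part) plus a $\mathbb{Q}$-dimension count on $1+\mathfrak{m}_j$ (for the unipotent part), and the torsion by embedding it into a product of boundedly many groups of roots of unity. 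Both proofs are sound. What the paper's proof buys is brevity and a clean invariant (the bound is the cohomological dimension, times the index); what yours buys is an explicit, self-contained arithmetic bound in terms of $N$, $k$, $S$ that does not invoke Borel--Serre/Serre's finiteness of cohomological dimension, and it correctly isolates the real point -- that a naive dimension count in $\GL_N$ cannot work and the finiteness must come from the $S$-unit theorem. Two small polish points: in the local factors you should work with $1+(\mathfrak{m}_j\cap\Lambda_j)$ rather than $1+\mathfrak{m}_j$ (and note it is abelian, not merely nilpotent, since $\mathbf{R}_j$ is commutative); and the finite generation of $R'^\times$ via Borel--Harish-Chandra is not actually needed -- since $A$ is finitely generated by hypothesis, only the rank and torsion bounds coming from the exact sequences are used.
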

\begin{proof} 
Selberg's lemma implies that any finitely generated linear group of characteristic zero has a torsion free finite index subgroup. Thus, $\Phi$ has a torsion free finite index subgroup which is  an irreducible higher-rank lattice. It is known \cite{Ser} that such lattices have a finite cohomological dimension. Let $C$ be the cohomological dimension of $\Gamma$.  Every finitely generated subgroup of $\Gamma$ has cohomological dimension at most $C$. The cohomological dimension of $\mathbb{Z}^n$ is $n$ so the rank of every finitely generated abelian subgroup of $\Gamma$ is at most $C$. Thus, the minimal number of generators of any abelian subgroup of $\Phi$ is at most $[\Phi:\Gamma]C$. 
\end{proof}

We can now prove Theorem \ref{thm:main}:

\begin{proof}[Proof of Theorem \ref{thm:main}] Theorem \ref{thm:sr.prime}, 
Lemma \ref{lem:gen.prop}, Corollary \ref{corol:BP} and  Lemma \ref{lemma:rank}
show that $\Phi$ satisfies the conditions of Theorem \ref{thm:f.o.r.criterion}. Hence $\Phi$ is first-order rigid.
\end{proof}

\section{First-order rigidity is not commensurability invariant} \label{sec:commensurable}

The goal of this section is to show that first order rigidity is not preserved by finite index subgroups nor by finite extensions. 
The key ingredient is the following theorem of Oger:
\begin{theorem}[\cite{Og91,Og96}]\label{Oger} Let $G$ and $H$ be finite-by-nilpotent groups. Then $G$ and $H$ are elementarily equivalent if and only if 
$G \times \Z$ and $H \times \Z$ are isomorphic.  
\end{theorem}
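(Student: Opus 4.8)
The plan is to isolate, for a finitely generated finite-by-nilpotent group $G$, an ``arithmetic invariant'' and to prove two things about it: that it is exactly what the first-order theory of $G$ determines, and that it is exactly what one extra direct factor of $\Z$ trivialises; the theorem then follows by transitivity. First a reduction: the statement is only of interest — and in this generality only correct — when $G$ and $H$ are finitely generated (for abelian groups $\Q\equiv\Q\oplus\Q$ while $\Q\times\Z\not\cong\Q^2\times\Z$), so I would assume this throughout. A finitely generated finite-by-nilpotent group is virtually polycyclic, hence finitely presented, and has a \emph{finite characteristic} torsion subgroup $T(G)$, with $G/T(G)$ torsion-free nilpotent. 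The arithmetic invariant of $G$ I take to be the triple consisting of $T(G)$, the family of pro-$p$ completions $\widehat{G}_p$ (equivalently the $\Z_p$-points of the Mal'cev hull of $G/T(G)$) over all primes $p$, and the rational Mal'cev completion $G\otimes\Q$. Pickel's finiteness theorem, that only finitely many groups share a given profinite completion, is what keeps this invariant under first-order control.

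\emph{Algebraic half: same arithmetic invariant $\iff G\times\Z\cong H\times\Z$.} The implication ($\Leftarrow$) is the easy one: from $G\times\Z\cong H\times\Z$ the torsion subgroups agree (the torsion of a product with $\Z$ is unchanged), and passing to pro-$p$ completions and to the $\Q$-Mal'cev hull gives $\widehat{G}_p\times\Z_p\cong\widehat{H}_p\times\Z_p$ for all $p$ and $(G\otimes\Q)\times\Q\cong(H\otimes\Q)\times\Q$; cancelling the free rank-one factor is legitimate because Krull--Schmidt-type uniqueness holds for finitely generated pro-$p$ nilpotent groups and for finite-dimensional nilpotent Lie algebras over $\Q$. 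The implication ($\Rightarrow$) is the serious algebraic input: two groups with the same torsion, the same $p$-adic completions for all $p$, and the same rational hull lie in a common ``genus'', and one must show they become isomorphic after a single copy of $\Z$ — a cancellation/genus statement in the spirit of Warfield's theory of groups with finite commutator subgroup, extended to the nilpotent case. Concretely, one patches a global ``fractional'' isomorphism $G\otimes\Q\cong H\otimes\Q$ with the integral isomorphisms at each prime; the one extra free summand provides exactly the stable-range room to clear denominators simultaneously at all primes and globally.

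\emph{Model-theoretic half: $G\equiv H\iff$ same arithmetic invariant.} For ($\Rightarrow$): using that $G$ is finitely presented, $G\equiv H$ forces $G$ and $H$ to have the same finite quotients, hence $\widehat{G}\cong\widehat{H}$, hence $\widehat{G}_p\cong\widehat{H}_p$ for all $p$ and $T(G)\cong T(H)$. Recovering $G\otimes\Q\cong H\otimes\Q$ is the subtle point: the lower central factors, their structure as $\Q$-vector spaces and the iterated commutator pairings on them are interpretable in $G$, elementary equivalence transfers these interpreted structures, and a descent argument upgrades ``$G$ and $H$ are locally isomorphic at every place'' to a genuine isomorphism of Mal'cev hulls over $\Q$. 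For ($\Leftarrow$): once one knows that $G$ is, up to elementary equivalence, coordinatised by $T(G)$, the local data and $G\otimes\Q$, a Feferman--Vaught / Ax--Kochen--Ershov-style analysis expresses $\mathrm{Th}(G)$ in terms of $\mathrm{Th}(\Z)$ together with this finite and local data, so groups with the same invariant are elementarily equivalent.

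Combining the two halves yields $G\equiv H\iff$ same arithmetic invariant $\iff G\times\Z\cong H\times\Z$, which is the theorem. The main obstacle is the ($\Rightarrow$) part of the model-theoretic half: extracting the rational Mal'cev completion from $\mathrm{Th}(G)$ uniformly in the prime and across the archimedean place. One must show that elementary equivalence is strong enough to see the ``global'' hull $G\otimes\Q$ — ruling out the kind of Hasse-principle failure that genuinely occurs for isomorphism of nilpotent Lie algebras (groups locally isomorphic at every place but not globally) — while, as the theorem itself asserts, \emph{not} being strong enough to recover $G$. Pinning down this dividing line, and controlling the finite set of bad primes where the local analysis needs care, is where the real work lies — and it is precisely the gap that the extra $\Z$ is designed to fill.
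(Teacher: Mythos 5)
The paper does not prove this statement at all: it is quoted as an external result of Oger \cite{Og91,Og96}, so there is no internal proof to compare your route against. Judged on its own terms, what you have written is a strategy outline rather than a proof, and the two pivots on which it turns are precisely the theorems being cited. Your ``algebraic half'' ($\Rightarrow$: same invariant implies $G\times\Z\cong H\times\Z$) is the Baumslag--Pickel--Warfield/Oger genus-and-cancellation theorem; you correctly identify it as ``the serious algebraic input'' and then assert it, with only a metaphor (``stable-range room to clear denominators'') in place of an argument. Likewise the ($\Rightarrow$) of your ``model-theoretic half'' hides a real step: having a given finite quotient is not a first-order property in any obvious way, and the standard fix (interpreting the congruence quotients $G/G^n$ uniformly, using that in a finite-by-nilpotent group every element of the verbal subgroup $G^n$ is a product of boundedly many $n$-th powers) is not mentioned; ``using that $G$ is finitely presented'' is not the reason this works. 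Recovering the rational Mal'cev hull from $\mathrm{Th}(G)$ is also asserted, and you yourself flag it as ``where the real work lies.''

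The most serious conceptual gap is in the ($\Leftarrow$) of your model-theoretic half, i.e.\ the direction $G\times\Z\cong H\times\Z\Rightarrow G\equiv H$. Feferman--Vaught goes the wrong way for this: it computes $\mathrm{Th}(G\times\Z)$ from $\mathrm{Th}(G)$ and $\mathrm{Th}(\Z)$, so it yields $G\equiv H\Rightarrow G\times\Z\equiv H\times\Z$, whereas here you must descend from an isomorphism of the products to elementary equivalence of the factors --- and elementary equivalence does not in general pass to direct factors. Closing this requires a bespoke coordinatisation of $\mathrm{Th}(G)$ (Oger's reduction to multilinear maps in \cite{Og96}), not a generic product analysis. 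Two smaller points: your observation that the statement needs ``finitely generated'' (via $\Q\equiv\Q\oplus\Q$ but $\Q\times\Z\not\cong\Q^2\times\Z$) is correct and worth making, since the paper's phrasing omits the hypothesis even though it only ever applies the theorem to finitely generated groups; and the cancellation of the rank-one factor in your easy direction (``Krull--Schmidt-type uniqueness'' for pro-$p$ completions) also deserves a citation or proof rather than an appeal to folklore.
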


\begin{remark} \label{rem:ee.finite} The following theorems imply that elementary equivalence classes of nilpotent groups are finite: \begin{enumerate}

\item Baumslag \cite{Bau} proved  that if $A,B,C$ and $D$ are finitely generated group such that $A \times B \cong C \times D$ and $B$ and $D$ have the same finite quotients then 
$A$ and $C$ have the same finite quotients. 
\item Pickel \cite{Pic} proved that if $G$ is a nilpotent group then  the collection of isomorphism classes of nilpotent groups with the same finite quotients as  $G$ is finite.
\end{enumerate}
\end{remark}

We start by giving an example of a first order rigid group which has a finite extension that is not first order rigid. 
The example follows Baumslag's construction \cite{Bau} of non-isomorphic finitely generated groups with the same finite quotients.
Every finitely generated abelian group is first order rigid. In particular, the infinite cyclic group $\Z$ is first order rigid.  
For every coprime $n,m \in \N^+$, let $C_n$ be the cyclic group of order $n$ and let
$\rho_m:\Z \rightarrow \aut(C_n)$ be the homomorphism defined by $\rho_m(1):=\alpha_m$ where
$\alpha_{m}$ is the automorphism of $C_n$ which sends each element to its $m$-th power.
Define $G_6:=C_{25}\mathbb{o}_{\rho_6} \Z $   and $G_{11}:=C_{25}\mathbb{o}_{\rho_{11}} \Z $.
Note that $\Z$ is isomorphic to a finite index subgroup of $G_6$,  so it is enough to prove that $G_6$ is not first order rigid. 

\begin{proposition}$G_6$  is not first order rigid. \end{proposition}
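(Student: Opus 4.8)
The plan is to show that $G_6$ and $G_{11}$ are elementarily equivalent but not isomorphic, so that $G_6$ (being finite-by-nilpotent, indeed finite-by-abelian) fails first order rigidity. The strategy follows Baumslag's construction, using Oger's theorem (Theorem \ref{Oger}): since $G_6$ and $G_{11}$ are finite-by-nilpotent, it suffices to prove that $G_6 \times \Z \cong G_{11} \times \Z$ while $G_6 \not\cong G_{11}$.

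\textbf{Step 1: $G_6 \times \Z \cong G_{11} \times \Z$.} The key observation is that $6 \cdot 11 = 66 \equiv 16 \pmod{25}$, and more usefully that $6$ and $11$ become identified after stabilization: one checks that in the unit group $(\Z/25\Z)^\times$ (which is cyclic of order $20$), the elements $6$ and $11$ generate the same cyclic subgroup, or more precisely there is a relation allowing a ``diagonal'' change of basis. Concretely, I would exhibit an explicit isomorphism $G_6 \times \Z \to G_{11} \times \Z$ as follows. Write $G_6 = \langle a, t \mid a^{25}=1,\ tat^{-1}=a^6 \rangle$ and let $s$ be the generator of the extra $\Z$ factor; similarly $G_{11} = \langle b, u \mid b^{25}=1,\ ubu^{-1}=b^{11}\rangle$ with extra generator $v$. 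Send $a \mapsto b$, and look for the image of $t$ of the form $b^j u^p v^q$ and the image of $s$ of the form $b^k u^r v^w$; the conjugation relation forces $6 \equiv 11^p \pmod{25}$ to be solvable after allowing the $\Z$-coordinate to absorb the discrepancy. Since $11$ has some order $d \mid 20$ in $(\Z/25\Z)^\times$ and $6$ lies in the cyclic group generated by $11$ iff $6$ is a power of $11$ mod $25$ — if this fails directly, Baumslag's trick is that $G_6 \times \Z$ and $G_{11} \times \Z$ are \emph{both} isomorphic to $C_{25} \rtimes (\Z \times \Z)$ where $\Z \times \Z$ acts through a single homomorphism $\Z^2 \to (\Z/25\Z)^\times$ whose image contains both $6$ and $11$ in its ``graph''; choosing coordinates of $\Z^2$ realizing $(6,1)$ versus $(11,1)$ as part of a basis gives the isomorphism. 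I would carry out this explicit coordinate computation.

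\textbf{Step 2: $G_6 \not\cong G_{11}$.} Here I would compute an isomorphism invariant that distinguishes them. The natural candidate: $G_6$ and $G_{11}$ each have a unique maximal finite normal subgroup, namely $C_{25}$, and the quotient is $\Z$ acting by $\alpha_6$ resp.\ $\alpha_{11}$; the conjugation action of a generator of $G_n/C_{25} \cong \Z$ on $C_{25}$ is well-defined up to inversion (since the only automorphisms of $\Z$ are $\pm 1$) and up to the automorphisms of $C_{25}$ that are induced by... actually up to conjugation, so the invariant is the cyclic subgroup of $\mathrm{Out}$-type data: the pair $\{\alpha_6, \alpha_6^{-1}\}$ as a subset of $\mathrm{Aut}(C_{25}) \cong (\Z/25\Z)^\times$, up to conjugation in $\mathrm{Aut}(C_{25})$ (which is abelian, so conjugation is trivial). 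Thus $G_6 \cong G_{11}$ would force $6 \equiv 11 \pmod{25}$ or $6 \equiv 11^{-1} \pmod{25}$; since $11^{-1} \equiv 16 \pmod{25}$ (as $11 \cdot 16 = 176 = 175+1$), neither holds, so $G_6 \not\cong G_{11}$.

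\textbf{Expected main obstacle.} The delicate point is Step 1: producing the explicit isomorphism $G_6 \times \Z \cong G_{11} \times \Z$, i.e.\ correctly identifying which coprimality/congruence facts about $6$, $11$, and $25$ make the stabilized groups isomorphic, and writing down the map on generators so that all relations are verified. This is exactly the content of Baumslag's example, so I would either cite \cite{Bau} for the isomorphism directly or reproduce the short matrix/coordinate computation; once that is in hand, Oger's theorem delivers elementary equivalence for free, and the non-isomorphism in Step 2 is an elementary number-theoretic check.
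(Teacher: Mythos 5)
Your strategy is exactly the paper's: invoke Oger's theorem (Theorem \ref{Oger}), show that $G_6\times\Z\cong G_{11}\times\Z$, and show that $G_6\not\cong G_{11}$. Your Step 2 is correct and is the paper's argument: $\tor(G_i)\cong C_{25}$, the conjugation action of a generator of $G_i/\tor(G_i)\cong\Z$ on $\tor(G_i)$ is well-defined up to inversion (and $\Aut(C_{25})$ is abelian, so there is no further ambiguity), and the resulting invariant is $\{\alpha_6,\alpha_{21}\}$ for $G_6$ versus $\{\alpha_{11},\alpha_{16}\}$ for $G_{11}$ since $6^{-1}\equiv 21$ and $11^{-1}\equiv 16\pmod{25}$; these sets are disjoint, so the groups are not isomorphic.

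The only real gap is Step 1, which you outline but never complete, and which you correctly flag as the delicate point. The relevant arithmetic is that $6$ has order $5$ in $(\Z/25\Z)^\times$ and $11\equiv 6^2\pmod{25}$, so $6$ and $11$ generate the same order-$5$ cyclic subgroup; this lets you pass between the two $\Z^2$-actions via a unimodular change of basis. Concretely, the paper's isomorphism $\psi:G_{11}\times\Z\to G_6\times\Z$ is $\psi\bigl((r,s),t\bigr)=\bigl((r,2s+5t),\,s+2t\bigr)$, whose $\Z^2$-part is the matrix $\left(\begin{smallmatrix}2&5\\1&2\end{smallmatrix}\right)$ of determinant $-1$; the homomorphism condition reduces to $11^{s}\equiv 6^{2s+5t}\pmod{25}$ for all $s,t\in\Z$, which holds because $6^2\equiv 11$ and $6^5\equiv 1\pmod{25}$. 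Once you write down this (or any equivalent) explicit map and verify it, your proposal is complete and coincides with the paper's proof.
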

\begin{proof}
For every $i \in \{6,11\}$, the set $\tor(G_i)$ of torsion elements of $G_i$ is a subgroup of $G_i$ which is  isomorphic to $C_{25}$.
If $g \in G$ and $g\tor(G_i)$ generates $G_i/\tor(G_i) \simeq C$ then the conjugation action of $g$ on $\tor(G_i)$  induces either $\alpha_{6}$ or $\alpha_{21}$
if $i=6$ and $\alpha_{11}$ or $\alpha_{16}$ if $i=11$. In particular, $G_6$ and $G_{11}$ are not isomorphic. On the other hand the map
$\psi:G_{11}\times \Z \rightarrow G_{6}\times \Z$ defined by $\psi(((r,s),t))=((r,2s+5t),s+2t)$ is an isomorphism. Theorem \ref{Oger} implies that 
$G_6$ is not first order rigid. 
\end{proof}
Our next goal it to show that $G_6$ has a finite extension which is first order rigid.

\begin{lemma}\label{now lem}
Let $M$ be a finite group such that all the automorphisms of $M$ are inner and define $H_1:=M \times \Z$. 
Then $H_1$ is first order rigid.
\end{lemma}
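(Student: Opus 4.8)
The plan is to apply Oger's theorem (Theorem~\ref{Oger}) directly: since $H_1 = M \times \Z$ is finite-by-nilpotent (indeed finite-by-abelian), any finitely generated group $\Lambda$ elementarily equivalent to $H_1$ is itself finite-by-nilpotent, and by Theorem~\ref{Oger} it suffices to show that $\Lambda \times \Z \cong H_1 \times \Z$ forces $\Lambda \cong H_1$. So the real content is the following purely group-theoretic cancellation statement: if $K$ is a finitely generated group with $K \times \Z \cong M \times \Z \times \Z$, then $K \cong M \times \Z$. First I would record that $\Lambda$, being elementarily equivalent to $H_1$, must be finite-by-nilpotent — this can be seen because $H_1$ has an abelian subgroup of finite index bounded by $|M|$, and "there is a normal abelian subgroup of index $\le |M|$" together with the relations pinning down the finite quotient is expressible (or one invokes that $\Lambda$ is residually finite and linear via Proposition~\ref{prop:res.finite}, hence by the structure of its finite quotients it is finite-by-nilpotent of bounded class); in any case finite-by-nilpotentness of $\Lambda$ is routine.

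Next I would isolate the torsion structure. In $H_1 = M \times \Z$ the torsion subgroup is exactly $M$ (it is the set of torsion elements, which forms a subgroup since $M$ is normal and the quotient is torsion-free), it is normal, and the quotient is $\Z$; moreover $M$ is characteristic. So in $\Lambda$, writing $T = \tor(\Lambda)$, I would argue $T$ is a finite normal subgroup with $\Lambda/T \cong \Z$: the torsion elements form a subgroup because $\Lambda$ is finite-by-nilpotent (the torsion of a nilpotent group is a subgroup, and this lifts appropriately), finiteness of $T$ follows from $\Lambda$ being finitely generated with $\Lambda/T$ torsion-free nilpotent of the same Hirsch length as $H_1$, which is $1$, so $\Lambda/T \cong \Z$. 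Now $\Lambda \times \Z \cong M \times \Z^2$; passing to torsion subgroups on both sides gives $T \cong M$. Then $\Lambda$ is an extension of $\Z$ by $M$, i.e. $\Lambda \cong M \rtimes_\theta \Z$ for some $\theta \in \Aut(M)$, where $\theta$ is the conjugation action of a generator lifting $1 \in \Z$.

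Here is where the hypothesis that all automorphisms of $M$ are inner does the work. Because $\theta$ is inner, say $\theta = c_{m_0}$ for some $m_0 \in M$, I can replace the chosen lift $z$ of the generator by $z' = m_0^{-1} z$; then $z'$ centralizes $M$ and still maps to a generator of $\Lambda/T \cong \Z$. Thus $\langle z'\rangle$ is central, infinite cyclic, intersects $M$ trivially, and together with $M$ generates $\Lambda$, giving $\Lambda \cong M \times \langle z'\rangle \cong M \times \Z = H_1$. Feeding this back into Oger's theorem: $\Lambda \times \Z \cong H_1 \times \Z$ holds automatically once $\Lambda$ is finite-by-nilpotent and elementarily equivalent to $H_1$, and we have just shown it forces $\Lambda \cong H_1$, so $H_1$ is first order rigid.

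The main obstacle — and the step I would be most careful about — is the reduction showing $\Lambda$ has the right coarse shape (torsion subgroup finite and normal, quotient $\cong \Z$, torsion subgroup $\cong M$) purely from elementary equivalence; the cleanest route is via Theorem~\ref{Oger}, which converts elementary equivalence into the isomorphism $\Lambda \times \Z \cong H_1 \times \Z$, after which the torsion-subgroup comparison is immediate and the only genuinely nontrivial input is the inner-automorphism hypothesis used to split the extension. Everything after the splitting is formal.
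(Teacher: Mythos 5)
Your proposal is correct and follows essentially the same route as the paper: both invoke Oger's theorem to reduce to the cancellation statement $\Lambda \times \Z \cong (M \times \Z) \times \Z \Rightarrow \Lambda \cong M \times \Z$, both identify $M$ as the torsion subgroup and deduce $\Lambda/\tor(\Lambda)\cong\Z$ (the paper via the structure theorem applied to $(H_2\times\Z)/\iota(M)$, you via Hirsch length — a cosmetic difference), and both then split the resulting extension $M \rtimes_\theta \Z$ using the hypothesis that $\Aut(M)$ is inner.
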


\begin{proof}
 Since $H_1$ is finite-by-nilpotent every group which is elementarily equivalent to $H_1$
is finite-by-nilpotent. Thus, Theorem \ref{Oger} implies that it is enough to show that if $H_1 \times \Z \cong H_2 \times \Z$ then 
$H_1 \cong H_2$.  Choose an isomorphism $\iota: H_1 \times \Z \rightarrow H_2 \times \Z$.  Note that $M$ and thus $\iota(M)$
are the torsion subgroups of  $H_1 \times \Z$ and $H_2 \times \Z$. In particular, $\iota(M) \le H_2$. 
Thus, $$\Z \times \Z \cong (H_1/M) \times \Z \cong (H_1 \times \Z)/M \cong (H_2 \times \Z)/\iota(M)=
(H_2/\iota(M)) \times \Z.$$
The structure theorem of finitely generated abelian groups implies that $\Z \cong H_1/M \cong H_2/\iota(M)$. 
Thus, $H_2 \cong M \mathbb{o}_\delta \Z$ for some homomorphism $\delta:\Z \rightarrow \aut(M)$. Since all the automorphisms of $M$ are inner,
$H_2 \cong M \times \Z \cong H_1$. 
\end{proof}

\begin{proposition} $G_6$ is a finite index subgroup of a first order rigid group.
\end{proposition}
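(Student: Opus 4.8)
The plan is to exhibit a finite group $M$ all of whose automorphisms are inner, together with an embedding of $C_{25}$ into $M$ realizing the automorphism $\alpha_6$ by conjugation, so that $G_6 = C_{25} \rtimes_{\rho_6} \mathbb{Z}$ sits as a finite-index subgroup inside $M \rtimes \mathbb{Z} = M \times \mathbb{Z}$ (the latter product being direct precisely because the generator of $\mathbb{Z}$ can be made to act by an inner automorphism, which we can then absorb). By Lemma \ref{now lem}, $M \times \mathbb{Z}$ is first order rigid, so this will finish the proof.

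First I would produce the group $M$. A natural candidate is a symmetric group $S_n$ for $n \neq 2, 6$, since those have only inner automorphisms, but one needs an element of order $25$ conjugating a cyclic subgroup of order $25$ to its sixth power — inside $S_n$ this is awkward because conjugation permutes cycle structure. A cleaner approach is to take $M$ to be a suitable extension of $C_{25}$: consider the holomorph-type construction. The automorphism $\alpha_6$ of $C_{25}$ has some order $d$ dividing $20$ (in fact $6$ has order $5$ modulo $25$ since $6^5 \equiv 1 \pmod{25}$, as $6^5 = 7776 = 311 \cdot 25 + 1$), so $\langle \alpha_6 \rangle \cong C_5$ and $M_0 := C_{25} \rtimes C_5$ is a nonabelian group of order $125$. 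One then needs to either check $M_0$ already has all inner automorphisms, or — more robustly — pass to a larger group. The standard trick: every group embeds in a group with only inner automorphisms (e.g. via iterated holomorphs, or using that $S_n$ for large $n$ is complete but does not solve our conjugation problem). Instead I would use the fact that if $M$ is \emph{complete} (centerless with only inner automorphisms) and contains a copy of $C_{25}$ on which some element acts as $\alpha_6$, we are done; and complete groups containing any prescribed finite group with prescribed conjugation behavior can be built. Concretely, embed $M_0 = C_{25} \rtimes_{\alpha_6} C_5$ into a complete group $M$ in a way that preserves the relevant conjugation: take $M$ to be the automorphism tower of $M_0$, or simply note that $M_0 \hookrightarrow \operatorname{Hol}(\operatorname{Hol}(\cdots))$ stabilizes at a complete group, and the conjugation action of the chosen order-$5$ element on $C_{25}$ is unchanged under these embeddings since they are injective homomorphisms.

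With such an $M$ in hand, pick $y \in M$ of order $5$ with $y \, c \, y^{-1} = c^6$ for the generator $c$ of $C_{25} \subset M$. Since every automorphism of $M$ is inner, $M \rtimes_{\delta} \mathbb{Z}$, where $\delta(1) = \operatorname{conj}_{y^{-1}} \in \operatorname{Aut}(M)$... more precisely, I would form the group $H := M \rtimes_\theta \mathbb{Z}$ where $\theta(1)$ is the inner automorphism given by conjugation by $y$; because this automorphism is inner, $H \cong M \times \mathbb{Z}$ via the map sending the generator $t$ of $\mathbb{Z}$ to $y^{-1} t$ (the element $y^{-1}t$ is central: it commutes with $M$ since $t m t^{-1} = y m y^{-1}$ gives $(y^{-1}t) m (y^{-1}t)^{-1} = m$, and it commutes with $t$). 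So $H \cong M \times \mathbb{Z}$, hence first order rigid by Lemma \ref{now lem}. Inside $H$, the subgroup generated by $c$ and $t$ is isomorphic to $C_{25} \rtimes_{\rho_6} \mathbb{Z} = G_6$, because $t c t^{-1} = y c y^{-1} = c^6$. This subgroup has finite index in $H$: it contains $t$, and $\langle c, t \rangle \cap M = \langle c \rangle$ has index $|M|/25$ in $M$, so $[H : G_6] = |M|/25 < \infty$. This completes the proof.

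The main obstacle I anticipate is the construction of $M$: one needs a finite group with only inner automorphisms that contains $C_{25}$ with an element inducing $\alpha_6$ by conjugation. The holomorph-tower / automorphism-tower argument gives such an $M$ abstractly (Wielandt's theorem guarantees the automorphism tower of a finite centerless group terminates at a complete group, and $M_0 = C_{25}\rtimes_{\alpha_6} C_5$ is centerless since $C_5$ acts faithfully), but one should double-check that the automorphism tower is built by \emph{injective} maps so the conjugation relation $y c y^{-1} = c^6$ persists — it is, since each stage $G \hookrightarrow \operatorname{Inn}(G) \leq \operatorname{Aut}(G)$ is the conjugation action, which is an embedding when $G$ is centerless, and it is compatible with the inclusions. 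Everything else — the absorption of an inner twist into a direct factor, the finite-index computation, the appeal to Lemma \ref{now lem} — is routine.
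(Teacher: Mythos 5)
Your overall plan is exactly the paper's: find a finite complete group $M$ (centerless, all automorphisms inner) that contains $C_{25}$ together with an element inducing $\alpha_6$ by conjugation, form $H = M\rtimes\Z$ with the generator of $\Z$ acting by that inner automorphism, observe $H\cong M\times\Z$ by absorbing the twist into a central shift of the generator, identify $G_6$ as a finite-index subgroup of $H$, and conclude via Lemma~\ref{now lem}. That structure is right, and your absorption argument and finite-index computation are fine.

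However, your construction of $M$ has a genuine error. You take $M_0 = C_{25}\rtimes_{\alpha_6}C_5$ and claim it is centerless ``since $C_5$ acts faithfully.'' Faithfulness of the outer action does not give triviality of the center: here $\alpha_6$ fixes $c^5$, because $6\cdot 5 = 30\equiv 5\pmod{25}$, so $\langle c^5\rangle\cong C_5$ lies in $Z(M_0)$. In fact $Z(M_0)=\langle c^5\rangle\neq 1$. Consequently Wielandt's theorem on automorphism towers does not apply to $M_0$ as you invoke it, and the inclusion $M_0\hookrightarrow\Aut(M_0)$ is not even injective. You could repair this by starting instead from the full holomorph $C_{25}\rtimes\Aut(C_{25})$, which \emph{is} centerless, but at that point the elaborate tower machinery is unnecessary. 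Indeed your reason for rejecting the symmetric-group route --- that ``conjugation permutes cycle structure'' --- is a red herring: if $c$ is a $25$-cycle in $S_n$ then $c^6$ is also a $25$-cycle (as $\gcd(6,25)=1$), hence $c$ and $c^6$ are conjugate in $S_n$. The paper's proof simply embeds the holomorph $C_{25}\rtimes\Aut(C_{25})$ in $S_n$ for a suitable $n\geq 7$; then $\alpha_6$ is the restriction of an inner automorphism of $S_n$, and $S_n$ (for $n\neq 2,6$) is complete, giving $M=S_n$ directly. That is both simpler and avoids the gap in your argument.
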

\begin{proof}
Embed $C_{25} \mathbb{o} \aut(C_{25})$ in the symmetric group  $S_n$ for some $n \ge 7$ and recall that all the automorphism of 
$S_n$ are inner for $n \ne 6$. Every automorphism of $C_{25}$ is the restriction of an inner automorphism of $C_{25} \mathbb{o} \aut(C_{25})$, in particular,  
every automorphism of $C_{25}$ is the restriction of an inner automorphism of $S_n$. 
Define $H:=S_n \mathbb{o}_\gamma \Z$ where $\gamma:\Z\rightarrow \aut(S_n)$ is a homomorphism for which 
 $\gamma(1)\in \aut(S_n)$ is an automorphism which preserves $C_{25}$ and acts on it as 
$\alpha_6$. Then, 
$G_6$ can be identified as a finite index subgroup of $H$.  
Since all the automorphisms of $S_n$ are inner, $H \cong S_n \times \Z$. 
Lemma \ref{now lem} implies that 
$H$ is first order rigid. 
\end{proof}


\begin{thebibliography}{WWW}

\bibitem[Bau]{Bau} G.~Baumslag, {\em Residually finite groups with the same finite images.} Compositio Math. 29 (1974), 249--252. 
\bibitem[BL]{BL} H.~ Bass and A.~Lubotzky, {\em  Nonarithmetic superrigid groups: counterexamples to Platonov's conjecture.} Ann. of Math. (2) 151 (2000), no. 3, 1151--1173. 
\bibitem[BHC]{BHC} A.~Borel and Harish-Chandra {\em Arithmetic subgroups of algebraic groups. } Ann. of Math. (2) 75 (1962) 485--535. 
\bibitem[BT65]{BT-reductive} A.~Borel and J.~Tits, {\em Groupes r\`{e}ductifs.} Inst. Hautes \`{E}tudes Sci. Publ. Math. No. 27 (1965) 55--150.
\bibitem[BT73]{BT-abstract}  A.~Borel and J.~Tits,  {\em Homomorphismes ``abstraits'' de groupes alg\`{e}briques simples.}  Ann. of Math. (2) 97 (1973), 499--571. 
\bibitem[Bre]{Bre} J.L.~Brenner,  {\em The linear homogeneous group III.} Ann. of Math. (2) 71 (1960) 210--223.
\bibitem[Cor]{Cor} K.~Corlette, {\em Archimedean superrigidity and hyperbolic geometry.} Ann. of Math. (2) 135 (1992), no. 1, 165--182.
\bibitem[Far]{Far}
B.~Farb, {\em The quasi-isometry classification of lattices in semisimple Lie groups. }Math. Res. Lett. 4 (1997), no. 5, 705--717.
\bibitem[GS]{GS} M.~Gromov and R.~Schoen, {\em Harmonic maps into singular spaces and p-adic superrigidity for lattices in groups of rank one.} Inst. Hautes \'{E}tudes Sci. Publ. Math. No. 76 (1992), 165--246. 
\bibitem[KM]{KM} O. Kharlampovich and A.~Myasnikov, {\em Elementary theory of free non-abelian groups.} J. Algebra 302 (2006), no. 2, 451--552.
\bibitem[Mal]{Mal} A. Malcev, {\em On isomorphic matrix representations of infinite groups.} (Russian. English summary) 
Rec. Math. [Mat. Sbornik] N.S. 8 (50), (1940) 405--422. 
\bibitem[Mar]{Mar} G.A.~Margulis,  {\em Discrete Subgroups of Semisimple Lie Groups.} Ergebnisse der Mathematik und ihrer Grenzgebiete (3), 17. Springer-Verlag, Berlin, 1991.
\bibitem[Mos]{Mos} G.D.~Mostow, {\em Strong Rigidity of Locally Symmetric Spaces.} Annals of Mathematics Studies, No. 78. Princeton University Press, Princeton, N.J.; University of Tokyo Press, Tokyo, 1973. v+195 pp. 
\bibitem[Oge91]{Og91} F. Oger, {\em Cancellation and elementary equivalence of finitely generated finite-by-nilpotent groups.} J. London Math. Soc., 30 (1991), 293--299.
\bibitem[Oge96]{Og96} F. Oger, {\em Elementary equivalence for finitely generated nilpotent groups and multilinear maps.} Bull. Austral. Math. Soc. 58 (1998), no. 3, 479--493.
\bibitem[OS]{OS} F. Oger and G. Sabbagh,  {\em Quasi-finitely axiomatizable nilpotent groups.} J. Group Theory, 9 (1) (2006),95--106.
\bibitem[Pic]{Pic} P.F.~Pickel, {\em Finitely generated nilpotent groups with isomorphic finite quotients.} Trans. Amer. Math. Soc. 160 (1971), 327--341.
\bibitem[Rag]{Rag} M.S.~Raghunathan,  {\em On the congruence subgroup problem.} Inst. Hautes Études Sci. Publ. Math. No. 46 (1976), 107--161. 
\bibitem[Sel02]{Sel02} Z.~Sela,  {\em Diophantine geometry over groups and the elementary theory of free and hyperbolic groups.} Proceedings of the International Congress of Mathematicians, Vol. II (Beijing, 2002), 87--92, Higher Ed. Press, Beijing, 2002. 
\bibitem[Sel09]{Sel09} Z.~Sela,  {\em Diophantine geometry over groups. VII. The elementary theory of a hyperbolic group.} Proc. Lond. Math. Soc. (3) 99 (2009), no. 1, 217--273.
\bibitem[Ser]{Ser}  J.P.~Serre, {\em Cohomologie des groupes discrets.} Prospects in mathematics (Proc. Sympos., Princeton Univ., Princeton, N.J., 1970), pp. 77--169. Ann. of Math. Studies, No. 70, Princeton Univ. Press, Princeton, N.J., 1971.

\bibitem[Tit]{Tit} J.~Tits,  {\em Free subgroups in linear groups.} J. Algebra 20 (1972), 250--270. 
\end{thebibliography}
\end{document}